\documentclass[journal]{IEEEtran}
\makeatletter
\makeatother
\usepackage{cite, graphicx, subfigure,url,setspace,algorithm}
\usepackage{amsmath,amssymb}
\interdisplaylinepenalty=2500
\usepackage{bm}
\usepackage{subfigure}
\usepackage{multirow}
\usepackage{tikz}

\newtheorem{proposition} {Proposition}
\newtheorem{remark}{Remark}

\newtheorem{corollary}{Corollary}
\begin{document}

\title{A Computationally Optimal Randomized Proper Orthogonal Decomposition Technique}

\author{Dan Yu, Suman Chakravorty
\thanks{D. Yu is with the Department
of Aerospace Engineering, Texas A\&M University, College Station, TX, 77840 USA e-mail: (yudan198811@hotmail.com).}
\thanks{S. Chakravorty is with the Department of  Aerospace Engineering, Texas A\&M University, College Station, TX 77840 USA e-mail: (schakrav@tamu.edu)}}


\maketitle


\begin{abstract}
In this paper, we consider the model reduction problem of large-scale systems, such as systems obtained through the discretization of partial differential equations. We propose a computationally optimal randomized proper orthogonal decomposition (RPOD$^*$) technique to obtain the reduced order model by  perturbing the primal and adjoint system using Gaussian white noise. We show that the computations  required by the RPOD$^*$ algorithm is orders of magnitude cheaper when compared to the balanced proper orthogonal decomposition (BPOD) algorithm and BPOD output projection algorithm while the performance of the RPOD$^*$ algorithm is much better than BPOD output projection algorithm. It is computationally optimal in the sense that a minimal number of snapshots is needed. We also relate the RPOD$^*$ algorithm to random projection algorithms. The method is tested on two advection-diffusion problems. 
\end{abstract}
\begin{IEEEkeywords}
Model Reduction, Proper Orthogonal Decomposition (POD), Randomization Algorithm.
\end{IEEEkeywords}

\section{Introduction}

In this paper, we are interested in the model reduction of large scale systems such as those  governed by partial differential equations (PDE). The dimension of the system is large due to the discretization of the PDEs. For instance, consider the atmospheric dispersion of an air pollutant \cite{stockie}. The emission of the contaminants on the ground level is shown in Fig. \ref{Emission} with four point sources labeled from S1 to S4.

This is a three dimensional problem, and after discretizing the PDE, the dimension of the system is $10^6$.  Therefore, we are interested in constructing a reduced order model (ROM) that can capture the input/output characteristics of the large model such that this ROM can then be used by a filtering algorithm for updating the states of the field, such as the Kalman filter. Also,  the actuators and sensors could be placed anywhere in this field, which leads to a model reduction problem of a large-scale system with a large number of inputs/outputs.  There are two popular contemporary model reduction techniques that have been studied in the past few decades: Principal component analysis (PCA) and randomization algorithms.

\begin{figure}[t]
\centering
\includegraphics[width=1.67in]{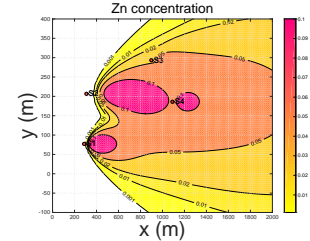}
\caption{Air pollutant problem}
\label{Emission}
\end{figure}

Among the PCA algorithms, Balanced Proper Orthogonal Decomposition (BPOD) \cite{willcox, rowley1} based on the balanced truncation \cite{moore}  and the snapshot Proper Orthogonal Decomposition (POD) technique  \cite{pod2} has been widely used. Balancing transformations are constructed  using  the impulse responses of both the primal and adjoint system, and hence, the most controllable and observable modes can be kept in the ROM. In 1978, Kung \cite{kung}  presented a new model reduction algorithm in conjunction with the singular value decomposition (SVD) technique, and the eigensystem realization algorithm (ERA) \cite{juang1985} was developed based on this technique. The BPOD is equivalent to the ERA procedure \cite{rowley4}, and forms the Hankel matrix using the primal and adjoint system simulations as opposed to the input-output data as in ERA. More recently, there has been work on obtaining information regarding the dominant modes of the system based on the snapshot POD, followed by an eigendecomposition of an approximating linear operator,  called the dynamic mode decomposition (DMD) \cite{schmid, rowleynew}. In \cite{apod1, apod2}, an adaptive POD algorithm based on the snapshot POD algorithm is introduced to recursively revise the locally valid ROMs. When there are new snapshots collected, decisions are made if the basis functions need to be updated. Both the DMD and adaptive POD algorithm are applicable for the nonlinear systems. 

The primary drawback of the BPOD and ERA  is that for a large scale system, such as that obtained by discretizing a PDE,  with a large number of inputs/outputs, the computational burden incurred is very high. There are two main parts to the computation: the first is to collect datasets from computationally expensive primal and adjoint simulation in order to generate the Hankel matrix. The second part is to solve the SVD problem for the resulting Hankel matrix.

To reduce the computational cost of BPOD, improved algorithms  have been proposed. For example, \cite{rowley1} proposed an output projection method to address the problem when the number of outputs is large. The outputs are projected onto a small subspace via an orthogonal projection $P_s$ that minimizes the error between the full impulse response and the projected impulse response. However, the method cannot make any claim regarding the closeness of the solution to one that is obtained from the full Hankel matrix, and is still faced with a very high computational burden when both the numbers of inputs  and outputs are large. There have also been methods proposed  \cite{rowleytail} to reduce the number of snapshots, however, the primary problem regarding large number of inputs/outputs remains the same.

There are two major classes of randomization algorithms used for low-rank matrix approximations and factorizations: random sampling algorithms and random projection algorithms. For a large scale matrix $H$, random sampling algorithms construct a rank $k$ approximation matrix $\hat{H}$ by choosing and rescaling some columns of $H$ according to certain sampling probabilities \cite{MC_SVD}, so the error satisfies
$\| H - \hat{H} \|_F \leq \| H - H_{(k)} \|_F + \epsilon \| H \|_F,$
with high probability, where $H_{(k)}$ is a best rank $k$ approximation of $H$, $\epsilon$ is a specified tolerance, and  $\| H \|_F$ denotes the Frobenius norm of H. The improved algorithm proposed in \cite{mohoney2} is to sample some columns according to leverage scores, where the leverage scores are calculated by performing the SVD of $H$, so that the error satisfies
$\| H  - \hat{H} \|_F \leq (1 + \epsilon) \|H - H_{(k)} \|_F,$
with high probability. A direct application of both algorithms would require the full Hankel matrix to be constructed, however, such a  construction of the Hankel matrix is  computationally prohibitive when the number of inputs/outputs is large. Further, the leverage scores are calculated by performing the SVD of the Hankel matrix, which is  also computationally prohibitive owing to the size of the problem.

In random projection  method \cite{randproj1},  the large matrix $H$ is projected on to an orthonormal basis $Q$ such that the error satisfies 
$\| H - QQ' H \| \leq (1 + \epsilon ) \| H - H_{(k)} \|$ with high probability, where $\| H \|$ denotes the spectral 2-norm of $H$, $(.)'$ denotes the conjugate transpose of $(.)$. A  Gaussian test matrix $\Omega$ is generated, and the orthonormal basis $Q$ is constructed by performing a QR factorization of the matrix product $H \Omega$. The bottleneck of this algorithm remains, as above, the construction of the full Hankel matrix, which is prohibitively expensive.

We had introduced an RPOD algorithm in \cite{acc2015} that randomly chooses a subset  of the input/output trajectories.  A sub-Hankel matrix is constructed using these sampled trajectories, which is then used to form a ROM in the usual BPOD fashion. 
The Markov parameters of the  ROM constructed using the sub-Hankel matrix were shown to be close to the Markov parameters of the full order system with high probability. We proved that  a lower bound exists for the number of the input/output trajectories that need to be sampled. The major problem associated with this algorithm is that different choices of the sampling algorithms would lead to different  lower bounds, and the choice of a good sampling algorithm other than the uniform distribution is unclear. 

In this paper, we propose the RPOD$^*$ algorithm which is closely related to the random projection algorithm. In the RPOD$^*$ algorithm, we perturb the primal and adjoint system with Gaussian white noise, and we prove that similar to the BPOD algorithm, the controllable and observable modes are retained in the ROM.  The Markov parameters of the ROM constructed using RPOD$^*$ are shown to be close to the Markov parameters of the full order system, while the error is  bounded. The main contribution of the RPOD$^*$ method is that it is computational orders of magnitudes less expensive when compared to the BPOD/ERA and randomization algorithms for a large-scale system with a large number of inputs/outputs.   The RPOD$^*$ algorithm can be viewed as applying the random projection on the full Hankel matrix $H$ twice without constructing the full Hankel matrix $H$, i.e., $\hat{H} = \Omega_2' Z' X \Omega_1 = \Omega_2' H \Omega_1$, where $\Omega_1, \Omega_2$ are two random projection matrices, and $Z, X$ are the usual impulse response matrices of the adjoint and primal system. However, we actually only generate $Z \Omega_2$ and $X \Omega_1$ which can be constructed from a single white noise perturbed trajectory each of the adjoint and primal system respectively, and thus, are orders of magnitude smaller in size than the impulse responses $Z$ and $X$. Thus, the computational cost to generate the Hankel matrix and to solve the SVD problem is saved by orders of magnitude. We believe that it is the most computational efficient POD algorithm. In practice, the RPOD$^*$ algorithm can be solved in real-time.

The rest of the paper is organized as follows. In Section \ref{formulation}, the problem is formulated, and in Section \ref{BPOD_sec}, we review the BPOD algorithm and  illustrate in a simplified fashion how to relate the BPOD ROM to the controllable and observable modes of the system. The RPOD$^*$ algorithm is proposed in Section \ref{RPOD_body}, and the formal proofs and results are shown. Also, we discuss some implementation problems in this section. In Section \ref{output}, we compare the RPOD$^*$ algorithm with BPOD, random projection and BPOD output projection algorithm. In Section \ref{example}, we provide computational results comparing the RPOD$^*$ with the BPOD/BPOD output projection for a one dimensional heat transfer problem and a three dimensional atmospheric dispersion problem. 
\section{Problem Formulation}\label{formulation}
Consider a stable linear input-output system:
\begin{eqnarray}\label{primal}
x_k = A x_{k-1} + B u_k, \nonumber \\
y_k = C x_k,
\end{eqnarray}
where $x_k \in \Re^N$,  $u_k \in \Re^p$, $y_k \in \Re^q$ are the states, inputs, and outputs at discrete time instant $t_k$ respectively. Assume that $A, B, C$ matrices are real. 

The adjoint system is defined as:
\begin{eqnarray}\label{adjoint}
z_k  = A' z_{k-1} + C' v_k, w_k = B' z_k,
\end{eqnarray}
where $z_k \in \Re^N$, $w_k \in \Re^p$ is the state and output of the adjoint system at time $t_k$ respectively,  $v_k \in \Re^q$. 

\textbf{Definition 1.} The Markov parameters of the system is defined as $C A^i B, i =1, \cdots$.

\textbf{Assumption 1.}  Assume that $A$ is diagonalizable and stable. 

Under Assumption 1, let,
\begin{eqnarray}\label{eigendecom}
A = V \Lambda U',
\end{eqnarray}
where $\Lambda$  are the eigenvalues, $( V, U )$ are the corresponding right and left eigenvectors. 

\textbf{Definition 2.} A mode $(\Lambda_i, U_i, V_i)$  is not controllable if $U_i' B = 0$, and is not observable if $C V_i = 0$, where $(\Lambda_i, V_i, U_i)$ is the $i^{th}$ eigenvalue-eigenvector pair.  

We partition the eigenvalues and eigenvectors $(\Lambda, V, U)$ into four parts:
\begin{eqnarray}\label{partition}
A = \begin{pmatrix} V_{co}' \\ V_{c \bar{o}}' \\ V_{\bar{c} o}' \\ V_{\bar{c} \bar{o}}' \end{pmatrix}' \begin{pmatrix} \Lambda_{co} & & & \\ & \Lambda_{c \bar{o}} & & \\ & & \Lambda_{\bar{c} o} & \\ & & & \Lambda_{\bar{c} \bar{o}} \end{pmatrix} \begin{pmatrix} U_{co}' \\ U_{c \bar{o}}' \\ U_{\bar{c} o}' \\ U_{\bar{c} \bar{o}}' \end{pmatrix},
\end{eqnarray}
where $ (\Lambda_{co}, V_{co}, U_{co})$ are the controllable and observable modes, $(\Lambda_{c \bar{o}}, V_{c \bar{o}}, U_{c \bar{o}})$ are the controllable but unobservable modes, $(\Lambda_{\bar{c} o}, V_{\bar{c} o}, U_{\bar{c} o})$ are the uncontrollable but observable modes, and $(\Lambda_{\bar{c} \bar{o}}, V_{\bar{c} \bar{o}}, U_{\bar{c}\bar{o}})$ are the uncontrollable and unobservable modes.

In this paper, we consider the model reduction problem for large-scale systems with a large number of inputs/outputs. The goal is to construct an ROM such that the outputs of the ROM $y_r$ are close to the outputs of the full order system $y$, i.e., $| y - y_r|$  is small. 

Denote the number of the controllable and observable modes is exactly $l$ throughout the paper. First, we summarize all the assumptions made in this paper.
\begin{itemize}
\item A1. $A$ is stable and diagonalizable. 
\item A2. $l \ll N$.
\item A3. $ U_{\bar{c} o }' B = 0, U_{\bar{c}\bar{o}}' B = 0, CV_{c \bar{o}} = 0, CV_{\bar{c}\bar{o}} = 0$.
\item A4. $U_{\bar{c} o }' B = \epsilon C_1, U_{\bar{c}\bar{o}}' B = \epsilon C_2, CV_{c \bar{o}} = \epsilon C_3, CV_{\bar{c}\bar{o}} = \epsilon C_4$, where $\epsilon$ is a small number, $C_1, C_2, C_3, C_4$ are constant matrices. And if $\|U_{co}' B \| = O(\| C_5 \|), \|CV_{co} \| = O(\| C_5 \|)$, then $\|C_1\|, \|C_2\|, \|C_3\|, \|C_4\| = O(\|C_5\|)$, where $C_5$ is a constant matrix.
\end{itemize}

The structure of the paper is summarized in Figure \ref{flowchart}. 

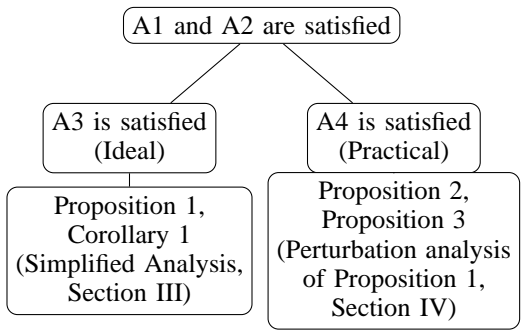
\begin{figure}[bp]
\centering
\begin{tikzpicture}[sibling distance=10em,
  every node/.style = {shape=rectangle, rounded corners,
    draw, align=center,
    top color=white, bottom color=white}]]
  \node {A1 and A2 are  satisfied}
    child { node {A3 is satisfied \\ (Ideal)}
             child{ node { Proposition 1, \\Corollary 1 \\(Simplified Analysis,\\Section \ref{BPOD_sec})} 
           }}
    child { node {A4 is  satisfied \\(Practical)}
      child { node {Proposition 2, \\Proposition 3 \\(Perturbation analysis \\ of Proposition 1, \\Section \ref{RPOD_body})}}};      
\end{tikzpicture}
\caption{Structure of the Paper}\label{flowchart}
\end{figure}

\textbf{Discussion on Assumptions.} For most of the practical applications we consider, A1 is satisfied. A2 needs to be satisfied for the system to have an ROM, this assumption is  typically satisfied for  a large-scale system. It should be noticed that from Definition 2, assumption A3 is the statement of controllability/observability of the different modes of the system. However, in practice, $U_{\bar{c} o}'B, CV_{c \bar{o}}$ are never exactly zero, and hence, in assumption A4, we assume that $ \| U_{\bar{c} o }' B \| \propto \epsilon , \| CV_{\bar{c} o} \| \propto \epsilon$, where $\epsilon$ is small. 
\section{Simplified Analysis} \label{BPOD_sec}
In this section, first, we briefly review the BPOD algorithm, and then illustrate in a simplified fashion how the transformation bases and the Markov parameters of the ROM constructed using the BPOD algorithm can be related to the controllable and observable modes $(\Lambda_{co}, V_{co}, U_{co} )$ of the system matrix $A$. The simplified  analysis is critical to understand the intuition behind the proposed RPOD$^*$ algorithm in Section \ref{RPOD_body}. 
\subsection{BPOD Algorithm}
Consider the stable linear system (\ref{primal})-(\ref{adjoint}),  let $B = [b_1, \cdots, b_p]$ where $b_i, i = 1, \cdots, p$ is the $i^{th}$ column of $B$,  and $C' = [c_1', \cdots, c_q']$, where $c_j, j = 1, \cdots, q$ is the $j^{th}$ row of $C$.  The BPOD Algorithm \cite{rowley1} is summarized in Algorithm \ref{BPOD_algo}. 
\begin{algorithm}[bt]
\begin{enumerate}
\item{Collect impulse response $X_b$ of the primal system (\ref{primal}): Use $b_i, i = 1, \cdots, p$ as initial conditions for (\ref{primal}) with $u_k = 0$. Take $m$ snapshots  at discrete times $t_1, t_2, \cdots, t_m$, and
\begin{eqnarray}\label{sp}
{X}_b = [x_1(t_1), \cdots, x_p(t_{1}), \cdots, x_1(t_m), \cdots, x_p(t_{m})],
\end{eqnarray}
 where $x_i(t_k)$ is the state snapshot at time $t_k$ with $b_i$ as the initial condition, $k = 1, 2, \cdots, m$ and $i = 1, 2, \cdots, p$.
 }
\item{Collect impulse response $Z_b$ of the adjoint system (\ref{adjoint}):
Use  $c_j', j = 1, \cdots, q$ as initial conditions for (\ref{adjoint}) with $v_k = 0$.  Take $n$ snapshots at time step $\hat{t}_1, \hat{t}_2, \cdots, \hat{t}_n$, and 
\begin{eqnarray}\label{sa}
{Z}_b = [z_1(\hat{t}_1), \cdots, z_q(\hat{t}_{1}), \cdots, z_1(\hat{t}_n), \cdots, z_q(\hat{t}_{n})],
\end{eqnarray}
where $z_j(\hat{t}_k)$ is the state snapshot of the adjoint system at time $\hat{t}_k$ with $c_j'$ as the initial condition, $k= 1, 2, \cdots, n$ and $j= 1, 2, \cdots, q$. 
}
\item {Construct Hankel matrix $H_b$: 
\begin{eqnarray}\label{h_start}
{H}_b = Z_b' {X_b}, 
\end{eqnarray}
}
\item{Solve the SVD problem of $H_b$: 
\begin{eqnarray}
{H}_b=  \begin{pmatrix} {L}_b & {L}_1 \end{pmatrix} \begin{pmatrix} {\Sigma}_b & 0 \\ 0 & \Sigma_1 \end{pmatrix} \begin{pmatrix} {R}_b' \\ {R}_1'  \end{pmatrix},
\end{eqnarray}
where ${\Sigma}_b$ contains the first $ l$ non-zero singular values, and $({L}_b, {R}_b)$ are the corresponding left and right singular vectors. $\Sigma_1$ contains the rest of the singular values.
}
\item {Construct the BPOD bases: 
\begin{eqnarray}
{T}_b = {X}_b {R}_b  {\Sigma}_b^{-1/2}, S_b = {\Sigma}_b^{-1/2} {L}_b' Z_b', 
\end{eqnarray}
 }
\item{The ROM is: 
\begin{eqnarray}\label{BPOD}
{A}_b = S_ b  A {T}_b, {B}_b = S_b B, {C}_b  = C {T}_b.
\end{eqnarray}
}
\end{enumerate}
\caption{BPOD Algorithm}\label{BPOD_algo}
\end{algorithm}

\subsection{Simplified Analysis}\label{heuristic}
First, we construct a modal BPOD ROM by projecting the BPOD bases $(T_b, S_b)$  onto the ROM eigenspace as in Algorithm \ref{BPOD_modal}.
\begin{algorithm}[bt]
\begin{enumerate}
\item Construct BPOD ROM $(A_b, B_b, C_b)$ and BPOD bases $(T_b, S_b)$ using BPOD Algorithm \ref{BPOD_algo}. 
\item Solve the eigenvalue problem of $A_b$: 
\begin{eqnarray}
A_b = P_b \Lambda_b P_b^{-1},
\end{eqnarray} 
where $\Lambda_b$ are the eigenvalues, and $P_b$ are the corresponding eigenvectors. 
\item Construct BPOD modal bases: 
\begin{eqnarray}
\Phi_b = P_b^{-1} S_b, \Psi_b = T_b P_b,
\end{eqnarray}
\item The modal ROM is:
\begin{eqnarray}
\hat{A}_b = \Phi_b A \Psi_b, \hat{B}_b = \Phi_b' B, \hat{C}_b = C \Psi_b.
\end{eqnarray}
\end{enumerate}
\caption{BPOD modal ROM Algorithm}\label{BPOD_modal}
\end{algorithm}

Under assumptions A1, A2, and A3, we have the following result. 

\begin{proposition}\label{Propo_b}
Denote $(\hat{A}_b, \hat{B}_b, \hat{C}_b)$ as the modal ROM constructed using Algorithm \ref{BPOD_modal}, $(\Phi_b, \Psi_b)$ are BPOD modal bases.  Then $\hat{A}_b = \Phi_b A \Psi_b = \Lambda_{co}, \hat{B}_b = \Phi_b' B = U_{co}' B, \hat{C}_b = C \Psi_b = CV_{co}$, where $(\Lambda_{co}, U_{co}, V_{co})$ are the controllable and observable modes of the system, and $\hat{C}_b \hat{A}_b^i \hat{B}_b = CA^i B, i = 1, 2, \cdots$.
\end{proposition}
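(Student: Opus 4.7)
The plan is to reduce everything to the biorthonormality $U'V = I$ of the left/right eigenvectors, which under A3 makes the snapshot matrices $X_b, Z_b$ decompose cleanly on the controllable/observable subspaces.

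First I would expand the primal snapshots using $A^k = V\Lambda^k U'$, so each column of $X_b$ has the form $V\Lambda^k U' b_i$. Assumption A3 gives $U_{\bar{c}o}'B = U_{\bar{c}\bar{o}}'B = 0$, so only the controllable rows of $U'B$ survive, and I can write
\begin{equation}
X_b = V_{co} M^X_{co} + V_{c\bar{o}} M^X_{c\bar{o}},
\end{equation}
for certain coefficient matrices $M^X_{co}, M^X_{c\bar{o}}$ built from $U_{co}'B, U_{c\bar{o}}'B$ and the discrete-time exponents $\Lambda^k$. By the same argument applied to the adjoint system, $Z_b = U_{co} M^Z_{co} + U_{\bar{c}o} M^Z_{\bar{c}o}$.

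Next I would multiply these two expressions: by biorthonormality ($U_{co}'V_{co}=I$, $U_{co}'V_{c\bar{o}}=0$, $U_{\bar{c}o}'V_{co}=0$, $U_{\bar{c}o}'V_{c\bar{o}}=0$) all four cross terms collapse and
\begin{equation}
H_b = Z_b' X_b = (M^Z_{co})' M^X_{co},
\end{equation}
so $H_b$ has rank at most $l$ and the truncated SVD $H_b = L_b\Sigma_b R_b'$ is exact with $\Sigma_b$ of size $l\times l$. Define $\alpha := M^X_{co} R_b \Sigma_b^{-1/2}$ and $\beta := \Sigma_b^{-1/2} L_b' (M^Z_{co})'$; then $\beta\alpha = \Sigma_b^{-1/2} L_b' H_b R_b \Sigma_b^{-1/2} = I_l$, and since $\alpha,\beta$ are square of order $l$, also $\alpha\beta = I_l$. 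This invertibility is the main technical pressure point of the argument, and I would make sure enough snapshots have been taken for $M^X_{co}, M^Z_{co}$ to have full row rank $l$ (otherwise $H_b$ would have rank below $l$, contradicting the assumption that there are exactly $l$ controllable-observable modes).

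From here the conclusion is almost mechanical. Using $AV_{co} = V_{co}\Lambda_{co}$, $AV_{c\bar{o}} = V_{c\bar{o}}\Lambda_{c\bar{o}}$, together with $Z_b'V_{c\bar{o}} = 0$ (biorthonormality again), I get
\begin{equation}
A_b = S_b A T_b = \beta \Lambda_{co} \alpha = \beta \Lambda_{co} \beta^{-1},
\end{equation}
so $A_b$ is similar to the diagonal matrix $\Lambda_{co}$. Hence the eigendecomposition in Algorithm 2 gives $\Lambda_b = \Lambda_{co}$ and $P_b = \beta$ (up to a trivial eigenvector scaling), and consequently $\hat{A}_b = P_b^{-1} A_b P_b = \Lambda_{co}$. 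Substituting into the modal bases,
\begin{equation}
\Psi_b = T_b P_b = V_{co} + V_{c\bar{o}}\bigl(M^X_{c\bar{o}} R_b \Sigma_b^{-1/2} \beta\bigr),\qquad \Phi_b = P_b^{-1} S_b = U_{co}' + \bigl(\alpha \Sigma_b^{-1/2} L_b' (M^Z_{\bar{c}o})'\bigr) U_{\bar{c}o}',
\end{equation}
and applying $CV_{c\bar{o}} = 0$ and $U_{\bar{c}o}'B = 0$ from A3 kills the stray terms, yielding $\hat{C}_b = CV_{co}$ and $\hat{B}_b = U_{co}'B$. Finally, expanding $CA^iB = \sum_{*} CV_{*}\Lambda_*^i U_*'B$ over the four blocks in (\ref{partition}), A3 annihilates every block except the $co$ one, giving $CA^iB = CV_{co}\Lambda_{co}^i U_{co}'B = \hat{C}_b \hat{A}_b^i \hat{B}_b$, which completes the Markov parameter matching.
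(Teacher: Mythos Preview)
Your proof is correct and follows essentially the same route as the paper: decompose $X_b,Z_b$ on the eigenvector blocks via A3, use biorthonormality to collapse $H_b$ to the rank-$l$ product $(M^Z_{co})'M^X_{co}$, read off the eigenstructure of $A_b$, and then kill the stray $V_{c\bar{o}}$ and $U_{\bar{c}o}$ contributions in $\Psi_b,\Phi_b$ with A3 again. The only cosmetic difference is that you deduce $\alpha\beta=I_l$ from $\beta\alpha=I_l$ and squareness, whereas the paper obtains the same identity by writing $R_b\Sigma_b^{-1}L_b'$ as the pseudoinverse of $H_b$.
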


\begin{proof}
Consider the snapshots in the primal snapshot ensemble (\ref{sp}), 
\begin{eqnarray}\label{bpod_1}
x_i(t_k) =  A^{t_k} b_i,
\end{eqnarray}
where $i = 1, \cdots, p$ and $k = 1, \cdots, m$. Hence,
\begin{eqnarray}\label{solution}
\begin{pmatrix} x_1(t_k), & \cdots,  & x_p(t_k) \end{pmatrix} = A^{t_k} B, 
\end{eqnarray}
 and the snapshot ensemble $X_b$ can be written as:
\begin{eqnarray}\label{snap_ab}
X_b = \begin{pmatrix} A^{t_1 }B & A^{t_2 }B & \cdots & A^{t_m }B \end{pmatrix}.
\end{eqnarray}

Under assumptions A1 and A3,  and from (\ref{partition}), we have:
\begin{eqnarray}\label{snap_time}
A^{t_k} B = \begin{pmatrix} V_{co} & V_{c \bar{o}} \end{pmatrix} \begin{pmatrix} \Lambda_{co} & \\ & \Lambda_{c \bar{o}} \end{pmatrix}^{t_k} \begin{pmatrix} U_{co}' \\ U_{c \bar{o}}' \end{pmatrix} B, \nonumber \\
= \begin{pmatrix} V_{co} & V_{c \bar{o}} \end{pmatrix} \begin{pmatrix} \alpha_{co}^k \\ \alpha_{c \bar{o}}^k \end{pmatrix},
\end{eqnarray}
where $\alpha_{co}^k, \alpha_{\bar{c} o}^k$ are the coefficient matrices. Substitute (\ref{snap_time}) into (\ref{snap_ab}), and  $X_b$ can be written as:
\begin{eqnarray}\label{RPOD_snapshot}
{X}_b  = \begin{pmatrix} V_{co} & V_{c \bar{o}} \end{pmatrix} \begin{pmatrix} {\alpha}_{co}^b \\ {\alpha}_{c \bar{o}}^b \end{pmatrix},
\end{eqnarray}
where  ${\alpha}_{co}^b,  {\alpha}_{c \bar{o}}^b$ the coefficient matrices. Similarly,
\begin{eqnarray}\label{RPOD_adsnapshot}
{Z}_b = \begin{pmatrix} U_{co} & U_{\bar{c}o} \end{pmatrix} \begin{pmatrix} {\beta}_{co}^b \\ {\beta}_{\bar{c}o}^b \end{pmatrix}, 
\end{eqnarray}
where ${\beta}_{co}^b, {\beta}_{\bar{c}o}^b$ are some coefficient matrices. 

Hence, 
\begin{eqnarray}
Z_b' X_b = ((\beta_{co}^b)' U_{co}' + (\beta_{\bar{c}o}^b)' U_{\bar{c}o}')(V_{co} \alpha_{co}^b + V_{c \bar{o}} \alpha_{c \bar{o}}^b), \nonumber \\
=  (\beta_{co}^b)'\alpha_{co}^b, 
\end{eqnarray}
where $U_{co}' V_{c \bar{o}} = 0, U_{\bar{c}o}' V_{co} = 0, U_{\bar{c}o}' V_{c \bar{o}} = 0$.

Under assumption A3, there are exactly $l$ non-zero singular values in the resulting SVD problem, i.e.,
\begin{eqnarray}\label{start}
{Z}_b' {X}_b  = {L}_b {\Sigma}_b {R}_b', 
\end{eqnarray}
where ${\Sigma}_b \in \Re^{l \times l}$ are the $l$ non-zero singular values and $({L}_b, {R}_b)$ are the corresponding left and right singular vectors. Moreover,
\begin{eqnarray}
{Z}_b' A {X}_b = (\beta_{co}^b)' \Lambda_{co} \alpha_{co}^b. 
\end{eqnarray}

Now, consider the BPOD ROM:
\begin{eqnarray}\label{bpodor}
{A}_b = S_b A T_b = {\Sigma}_b^{-1/2} {L}_b' ({Z}_b' A {X}_b) {R}_b {\Sigma}_b^{-1/2} \nonumber \\
= \underbrace{{\Sigma}_b^{-1/2} {L}_b' (\beta_{co}^b)'}_{P_b} \Lambda_{co} \underbrace{\alpha_{co}^b {R}_b {\Sigma}_b^{-1/2}}_{\hat{P}_b}. 
\end{eqnarray}

We show that $\Lambda_{co}$ are the eigenvalues of ${A_b}$, and $P_b$ are the eigenvectors as follows:
\begin{eqnarray}
P_b \hat{P}_b = { \Sigma}_b^{-1/2} {L}_b' (\beta_{co}^b)' \alpha_{co}^b {R}_b {\Sigma}_b^{-1/2} \nonumber \\
= {\Sigma}_b^{-1/2} {L}_b' {L}_b {\Sigma}_b {R}_b' {R}_b {\Sigma}_b^{-1/2} = I.
\end{eqnarray}

Also,
\begin{eqnarray}\label{bpodeig}
\hat{P}_b P_b = \alpha_{co}^b {R}_b  {\Sigma}_b^{-1/2} {\Sigma}_b^{-1/2} {L}_b' (\beta_{co}^b)' \nonumber \\
= \alpha_{co}^b ((\beta_{co}^b)' \alpha_{co}^b)^{+} (\beta_{co}^b)' = I,
\end{eqnarray}
where $(.)^{+}$ denotes the pseudoinverse of $(.)$. Hence, $\hat{P}_b = P_b^{-1}$ and from (\ref{bpodor}),
\begin{eqnarray}
\Lambda_{co} = \underbrace{(P_b^{-1} S_b)}_{\Phi_b} A \underbrace{(T_b P_b)}_{\Psi_b}.
\end{eqnarray}

Also, we prove that $\Psi_b, \Phi_b$ are biorthogonal as follows.
\begin{eqnarray}
\Psi_b \Phi_b = T_b P_b P_b^{-1} S_b = T_b S_b, \nonumber \\
\Phi_b \Psi_b = P_b^{-1}S_b T_b P_b = P_b^{-1} P_b = I, 
\end{eqnarray}
where $(T_b, S_b)$ are BPOD bases, and are biorthogonal. 

Hence,
\begin{eqnarray}
{\Psi}_b = {T}_b P_b = {X}_b {R}_b {\Sigma}_b^{-1/2}  {\Sigma}_b^{-1/2} {L}_b' (\beta_{co}^b)' \nonumber \\
= (V_{co} \alpha_{co}^b + V_{c \bar{o}} \alpha_{c \bar{o}}^b)  {R}_b {\Sigma}_b^{-1} L_b' (\beta_{co}^b)'  \nonumber \\ = V_{co}\hat{P}_b P_b + V_{c \bar{o}} C_6 = V_{co} + V_{c \bar{o}} C_6,
\end{eqnarray}
where $C_6 = \alpha_{c \bar{o}}^b R_b \Sigma_b^{-1} L_b' (\beta_{co}^b)'$.  Similarly, 
\begin{eqnarray}
{\Phi}_b = P_b^{-1} S_b = U_{co}' + C_7U_{\bar{c}o}',
\end{eqnarray}
where $C_7 = \alpha_{co}^b R_b \Sigma_b^{-1} L_b' (\beta_{\bar{c}o}^b)'$. Under assumption A3,  the modal BPOD ROM constructed using $({\Psi}_b, {\Phi}_b)$ is:
\begin{eqnarray} \label{diagonalized_bpod}
\hat{A}_b = \Phi_b A \Psi_b =  \Lambda_{co}, \nonumber \\
\hat {B}_b= \Phi_b' B = U_{co}' B + C_7 U_{\bar{c}o}' B = U_{co}' B, \nonumber \\
 \hat{C}_b = C \Psi_b = CV_{co} + C V_{c \bar{o}} C_6 = C V_{co}.
\end{eqnarray}
And the Markov parameters of the ROM are:
\begin{eqnarray}
\hat{C}_b \hat{A}_b^i \hat{B}_b 
= C V_{co} \Lambda_{co}^i U_{co}' B = CA^iB.
\end{eqnarray}
\end{proof}


\textbf{Discussion on Proposition \ref{Propo_b}.} Recall the impulse response snapshot ensembles collected in the BPOD are:
\begin{eqnarray}
{X}_b = \underbrace{V_{co}}_{N \times l} \underbrace{\alpha_{co}^b}_{l \times pm} + \underbrace{V_{c \bar{o}} \alpha_{c \bar{o}}^b}_{N \times pm}, {Z}_b = \underbrace{U_{co}}_{N \times l} \underbrace{\beta_{co}^b}_{l \times qn} + \underbrace{U_{\bar{c}o} \beta_{\bar{c}o}^b}_{N \times qn},
\end{eqnarray}
and the Hankel matrix is:
\begin{eqnarray}
{H}_b = {Z}_b' {X}_b  = \underbrace{(\beta_{co}^b)'}_{qn \times l} \underbrace{\alpha_{co}^b}_{ l \times pm} \in \Re^{qn \times pm}.
\end{eqnarray}

There are two main parts to the computation:
\begin{enumerate}
\item The primal and adjoint snapshot ensembles ${X}_b \in \Re^{N \times pm}, {Z}_b \in \Re^{N \times qn}$, and hence, the construction of ${H}_b$ takes time $O(pqmnN)$.
\item The computational cost to solve the SVD of ${H}_b$ is $O(\min \{ p^2m^2qn, pmq^2n^2 \})$.
\end{enumerate}

However, $H_b$ is only rank ``$l$", where $l \ll N, pm, qn$, and from the development of Proposition \ref{Propo_b}, we see that the modal BPOD ROM given in (\ref{diagonalized_bpod}) is completely determined by
the $l$ controllable and observable modes and is invariant to the data $X_b$ and $Z_b$, i.e., as long as the snapshot ensembles can be written as:
\begin{eqnarray}\label{snap_basis}
\underbrace{X^*}_{N \times m} = \underbrace{V_{co} }_{N \times l} \underbrace{\alpha^*}_{l \times m} + \underbrace{V_{c \bar{o}} \bar{\alpha}^*}_{N \times m},  
\end{eqnarray}
\begin{eqnarray}
\underbrace{Z^*}_{N \times n} = \underbrace{U_{co} }_{N \times l} \underbrace{\beta^*}_{l \times n} + \underbrace{U_{\bar{c}o} \bar{\beta}^*}_{N \times n},
\end{eqnarray}
where $\alpha^*, \beta^*$ are rank $l$ constant matrices, and $\bar{\alpha}^*, \bar{\beta}^*$ are some constant matrices of suitable dimensions, then under assumptions A1, A2, and A3, the following corollary holds. 

\begin{corollary}\label{c1}
Denote $(A^*, B^*, C^*)$ as the modal ROM constructed using Algorithm \ref{BPOD_modal} with snapshot ensembles $X^*, Z^*$ as in (\ref{snap_basis}). Then $A^* = \Lambda_{co}, B^* = U_{co}' B, C^* = C V_{co}$, where $(\Lambda_{co}, U_{co}, V_{co})$ are the controllable and observable modes of the system, and $C^* (A^*)^i B^* = CA^iB, i = 1, 2, \cdots$.
\end{corollary}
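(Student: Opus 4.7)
The plan is to observe that Corollary \ref{c1} is essentially a restatement of Proposition \ref{Propo_b} with the specific impulse response ensembles $X_b, Z_b$ replaced by the abstract ensembles $X^*, Z^*$. Inspecting the proof of Proposition \ref{Propo_b}, one sees that the only properties of $X_b$ and $Z_b$ that were actually used are (i) the factorization (\ref{RPOD_snapshot})--(\ref{RPOD_adsnapshot}) into the $V_{co}, V_{c\bar{o}}$ and $U_{co}, U_{\bar{c}o}$ components, (ii) the fact that the coefficient blocks $\alpha_{co}^b$ and $\beta_{co}^b$ have rank $l$, and (iii) the biorthogonality relations $U_{co}' V_{c\bar{o}} = 0$, $U_{\bar{c}o}' V_{co} = 0$, $U_{\bar{c}o}' V_{c\bar{o}} = 0$ coming from the eigen-decomposition (\ref{partition}). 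Since (\ref{snap_basis}) gives exactly the same factorization with $\alpha^*, \beta^*$ of rank $l$ playing the role of $\alpha_{co}^b, \beta_{co}^b$, the entire proof of Proposition \ref{Propo_b} transfers verbatim.

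Concretely, I would proceed in four steps. First, compute the Hankel matrix $H^* = (Z^*)' X^* = (\beta^*)' \alpha^*$, where all cross-terms vanish because of the biorthogonality of the left and right eigenvectors. Second, since $\alpha^* \in \mathbb{R}^{l \times m}$ and $\beta^* \in \mathbb{R}^{l \times n}$ are both rank $l$, the SVD of $H^*$ yields exactly $l$ non-zero singular values, giving $H^* = L^* \Sigma^* (R^*)'$ with $\Sigma^* \in \mathbb{R}^{l \times l}$ and corresponding singular vectors $L^*, R^*$. Third, compute $(Z^*)' A X^* = (\beta^*)' \Lambda_{co} \alpha^*$, again using biorthogonality together with the block-diagonal structure (\ref{partition}); this gives
\begin{equation*}
A^{*}_{\text{full}} = (\Sigma^*)^{-1/2} (L^*)' (Z^*)' A X^* R^* (\Sigma^*)^{-1/2}
= \underbrace{(\Sigma^*)^{-1/2} (L^*)' (\beta^*)'}_{P^*} \Lambda_{co} \underbrace{\alpha^* R^* (\Sigma^*)^{-1/2}}_{\hat{P}^*},
\end{equation*}
and the identities $P^* \hat{P}^* = I$ and $\hat{P}^* P^* = I$ follow exactly as in (\ref{bpodeig}), so $\hat{P}^* = (P^*)^{-1}$ and $\Lambda_{co}$ are the eigenvalues of the intermediate ROM $A^*_{\text{full}}$.

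Fourth, form the modal bases $\Psi^* = X^* R^* (\Sigma^*)^{-1/2} P^*$ and $\Phi^* = (P^*)^{-1} (\Sigma^*)^{-1/2} (L^*)' (Z^*)'$, substitute (\ref{snap_basis}), and use $P^* \hat{P}^* = I$ to collapse the $V_{co}$ and $U_{co}$ parts, obtaining $\Psi^* = V_{co} + V_{c\bar{o}} C_6^*$ and $\Phi^* = U_{co}' + C_7^* U_{\bar{c}o}'$ for some constant matrices $C_6^*, C_7^*$. Applying assumption A3 then immediately kills the residual terms in $B^* = \Phi^* B$ and $C^* = C \Psi^*$, yielding $A^* = \Lambda_{co}$, $B^* = U_{co}' B$, $C^* = C V_{co}$. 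The Markov parameter identity $C^* (A^*)^i B^* = C V_{co} \Lambda_{co}^i U_{co}' B = C A^i B$ then follows by once more invoking A3 together with the partition (\ref{partition}), since the non-$(co)$ blocks contribute zero.

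There is essentially no hard step: the work was done in Proposition \ref{Propo_b}, and the only thing to verify is that no property specific to impulse responses was used. The mildest subtlety worth flagging explicitly is that the rank-$l$ assumption on $\alpha^*$ and $\beta^*$ is necessary for $H^*$ to have exactly $l$ non-zero singular values and hence for the pseudoinverse step $\alpha^*((\beta^*)'\alpha^*)^+(\beta^*)' = I$ (analogous to (\ref{bpodeig})) to hold; without it, the ROM dimension could collapse below $l$ and the identification of the eigenvalues with $\Lambda_{co}$ would fail. Apart from this bookkeeping point, the proof is a direct transcription of Proposition \ref{Propo_b}.
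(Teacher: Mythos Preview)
Your proposal is correct and follows exactly the approach indicated by the paper, which simply states that the corollary is proved by replacing $X_b, Z_b$ in the proof of Proposition \ref{Propo_b} with $X^*, Z^*$ and omits the details. Your write-up is in fact more explicit than the paper's, correctly identifying that the only structural ingredients needed are the factorizations (\ref{snap_basis}), the rank-$l$ condition on $\alpha^*, \beta^*$, and the biorthogonality of the eigenvectors.
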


Corollary \ref{c1} can be proved by replacing $X_b, Z_b$ in Proposition \ref{Propo_b} with $X^*, Z^*$, and the proof is omitted here. 
We make the following observations. 

\textbf{Observation:} 1) The snapshot ensembles do not have to be collections of the impulse responses as in BPOD.
2) Only $l$ snapshots may be enough  to extract all the controllable and observable modes of the system.

Bearing this observation in mind,  in the next section,  we introduce the RPOD$^*$ algorithm which generates the snapshot ensembles using exactly ``$l$" snapshots, such that the Corollary \ref{c1} holds.
\section{Computationally Optimal Randomized Proper Orthogonal Decomposition (RPOD$^*$)}\label{RPOD_body}
In this section, first, we define the computationally optimal snapshot ensemble, and prove that the snapshot ensembles generated by perturbing the primal/adjoint system with white noise are computationally optimal snapshot ensembles. Then we propose the RPOD$^*$ algorithm in Section \ref{br}. We discuss implementation issues of the algorithm in Section \ref{RPOD_issue}.
\subsection{Computationally Optimal Snapshot Ensemble}\label{opt_snapshot}
Under assumptions A1, A2 and A3, we define a computationally optimal snapshot ensemble of the system as follows.

\textbf{Definition 3}: A computationally optimal primal snapshot ensemble $X^*$ is an $l$-snapshot ensemble of rank $l$ which can be written as in Eq. (\ref{snap_basis}), i.e., $ m = l$.

A similar definition suffices for the computationally optimal adjoint snapshot ensemble $Z^*$.

Consider the system (\ref{primal})-(\ref{adjoint}), under assumption A1 that $A$ is stable, there exists a finite number $t_{ss}$, such that $\| A^{t_{ss}}\| \approx 0$. Under assumptions A1, A2 and A3, we have the following result.
\begin{proposition}\label{Po}
Perturb  the primal system (\ref{primal}) with white noise $u_k$, and collect $m$ snapshots at time $t_1, t_2, \cdots, t_m$, where $t_m \geq t_{ss}$, and $\|A^{t_{ss}} \| \approx 0$. Denote the snapshot ensemble as $X_r = \begin{pmatrix} x_1 & x_2 & \cdots & x_m \end{pmatrix}$. If $m = l$, where $l$ is the number of the controllable and observable modes of the system, then $X_r$ is a computationally optimal snapshot ensemble.
\end{proposition}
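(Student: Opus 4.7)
The plan is to solve the primal recursion in closed form, split it across the four eigenspaces using the eigendecomposition (\ref{partition}), eliminate two of the four components via assumption A3, and then reduce the claim to showing that an $l\times l$ coefficient matrix driven by the Gaussian input is almost surely invertible.

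First I would iterate (\ref{primal}) with $x_0 = 0$ to get $x_{t_k} = \sum_{j=1}^{t_k} A^{t_k - j} B u_j$, and then expand $A^{t_k-j}B$ using (\ref{partition}) and the biorthogonality $U'V = I$. Under A3 we have $U_{\bar c o}'B = U_{\bar c\bar o}'B = 0$, so only the controllable branches survive and
\begin{equation*}
x_{t_k} \;=\; V_{co}\,\xi_k \;+\; V_{c\bar o}\,\eta_k,\quad \xi_k := \sum_{j=1}^{t_k} \Lambda_{co}^{t_k-j} U_{co}'B\, u_j,\ \ \eta_k := \sum_{j=1}^{t_k} \Lambda_{c\bar o}^{t_k-j} U_{c\bar o}'B\, u_j.
\end{equation*}
Concatenating across $k = 1,\dots,l$ gives $X_r = V_{co}\,\alpha^* + V_{c\bar o}\,\bar\alpha^*$ with $\alpha^* = [\xi_1,\dots,\xi_l]\in\Re^{l\times l}$ and $\bar\alpha^*=[\eta_1,\dots,\eta_l]$, which matches the template (\ref{snap_basis}). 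It then remains to check that $X_r$ is actually of rank $l$, i.e.\ its $l$ columns are linearly independent; since $V_{co}$ and $V_{c\bar o}$ span biorthogonal eigenspaces, a null combination $X_r c = 0$ forces $\alpha^* c = 0$, so the rank condition collapses to showing that $\alpha^*$ is non-singular.

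The nontrivial step, and the main obstacle, is establishing that the square random matrix $\alpha^*$ is almost surely invertible. The key structural fact is that $(\Lambda_{co},\, U_{co}'B)$ is controllable by the very definition of the $co$ modes, so the projected subsystem $\xi_k = \Lambda_{co}^{t_k-t_{k-1}}\xi_{k-1} + (\text{noise})$ has a positive-definite controllability Gramian once the horizon exceeds the controllability index; the hypothesis $t_m \geq t_{ss}$ guarantees this Gramian is essentially the steady-state one and hence well conditioned for each $t_k$. I would then argue that $\alpha^*$ is a linear image of the Gaussian vector $(u_1,\dots,u_{t_l})$ under a map whose range (viewed into $\Re^{l\times l}$) is full-dimensional — this uses controllability together with the strict ordering $t_1<\dots<t_l$. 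Consequently the induced distribution on $\Re^{l\times l}$ is absolutely continuous with respect to Lebesgue measure, and the singular-matrix variety $\{\det = 0\}$ has measure zero, giving $\det\alpha^*\neq 0$ with probability one. Combining this with the decomposition above yields both required properties in Definition 3, completing the proof.
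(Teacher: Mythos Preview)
Your decomposition $X_r = V_{co}\alpha^* + V_{c\bar o}\bar\alpha^*$ and the reduction of the rank condition to the invertibility of the $l\times l$ block $\alpha^*$ are both correct. The gap is in the invertibility step: the linear map $(u_1,\dots,u_{t_l})\mapsto\alpha^*$ does \emph{not} in general have full-dimensional range in $\Re^{l\times l}$, so the induced law need not be absolutely continuous with respect to Lebesgue measure there. A concrete counterexample: take $p=1$, $l=2$, $t_1=1$, $t_2=2$, and write $b=U_{co}'B\in\Re^2$. Then $\xi_1=bu_1$ and $\xi_2=\Lambda_{co}bu_1+bu_2$ depend on only the two scalars $u_1,u_2$, so $\alpha^*=[\xi_1\ \xi_2]$ ranges over a two-dimensional linear subspace of $\Re^{2\times 2}$, and your ``absolutely continuous $\Rightarrow$ $\{\det=0\}$ is null'' step does not apply. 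The Gramian argument you sketch shows each column $\xi_k$ individually has a nondegenerate law on $\Re^l$, but that is not enough to control the joint rank. What does work is to observe that $\det\alpha^*$ is a polynomial in the Gaussian entries of $(u_1,\dots,u_{t_l})$; a real polynomial of jointly Gaussian variables is either identically zero or nonzero almost surely, so it suffices to exhibit \emph{one} input sequence rendering $\alpha^*$ nonsingular, and controllability of $(\Lambda_{co},U_{co}'B)$ together with $t_1<\cdots<t_l$ supplies such a sequence.

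For comparison, the paper does not pass to the eigenbasis at this stage. It instead factors the raw snapshots as $X_r=X_b\,\Omega_1$, where $X_b=[B\ AB\ \cdots\ A^{t_{ss}-1}B]$ is the BPOD impulse-response block and $\Omega_1$ is a (block) upper-triangular matrix built from the white-noise samples; the triangular structure gives $\Omega_1$ full column rank, and only then is $X_b$ expanded as $V_{co}\alpha_{co}^b+V_{c\bar o}\alpha_{c\bar o}^b$ to obtain $\alpha_{co}=\alpha_{co}^b\Omega_1$. The payoff of that route is the explicit identity $X_r=X_b\Omega_1$, which the paper reuses in Section~\ref{comp_rand} to interpret RPOD$^*$ as a random projection of the BPOD Hankel matrix; your direct eigenbasis argument, once repaired, is cleaner probabilistically but does not surface that structural connection.
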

\begin{proof}
For the snapshots taken before $t_{ss}$, the state snapshot $x_k$ at time $k$ is:
\begin{eqnarray}\label{before}
x_k = \sum_{i = 0}^{k -1} A^{i} B u(k - i), k \leq t_{ss}.
\end{eqnarray}

Suppose there is a snapshot ensemble $X_f$ which takes $t_{ss}$ snapshots at time $k  = 1 $ to $k = t_{ss}$, then
from (\ref{before}), the snapshot ensemble $X_f$ can be written as:
\begin{eqnarray}\label{upper_triangular}
X_f = \begin{pmatrix} x_1 & x_2 & \cdots & x_{t_{ss}} \end{pmatrix} 
=\underbrace{\begin{pmatrix} B & AB & \cdots & A^{t_{ss} - 1}B \end{pmatrix}}_{X_b} \times \nonumber \\
 \underbrace{\begin{pmatrix} u(1) & u(2) & \cdots & u(t_{ss} - 1)& u(t_{ss}) \\ 0 & u(1) & \cdots&u(t_{ss} -2) & u(t_{ss} - 1) \\ 0 & 0 & \cdots &u(t_{ss} - 3)& u(t_{ss} -2) \\ \vdots & \vdots & \cdots & \cdots & \vdots\\ 0 & 0 & \cdots & 0& u(1) \end{pmatrix}}_{\Omega},
\end{eqnarray}
where $X_b$ is the BPOD snapshot ensemble from time $k = 0 $ to $k = t_{ss} -1$. Denote $\Omega = \begin{pmatrix} \omega_1 & \omega_2 \cdots & \omega_{t_{ss}} \end{pmatrix}$, where $\omega_i$ is the $i^{th}$ column of $\Omega$.
The $\Omega$ matrix above has columns that are linearly independent since it is upper triangular.  

Since $X_r$ consists of $m$ columns of $X_f$, $X_r$ can be written as:
\begin{eqnarray}\label{Fac}
X_r = \begin{pmatrix} x_1 & \cdots & x_m \end{pmatrix} = X_b \underbrace{\begin{pmatrix} \omega_1 & \cdots & \omega_m \end{pmatrix}}_{\Omega_1},
\end{eqnarray}
where $\begin{pmatrix} \omega_1 & \cdots & \omega_m \end{pmatrix}$ are the corresponding columns of $\Omega$, and hence, $\Omega_1$ has full column rank.

For the snapshot $x_k$ taken after $t_{ss}$, $x_k$ could also be written as: 
$x_k = X_b \omega_k,$
where $\omega_k$ is a column vector whose entries are white noises. Therefore, $\omega_k$ is independent of $\omega_1, \cdots, \omega_m$ in (\ref{Fac}), and hence, for all the snapshots collected in $X_r$, $X_r = X_b \Omega_1$, where $\Omega_1$ has full column rank. 




Recall that from (\ref{RPOD_snapshot}), $X_b$ can be written as:
\begin{eqnarray}
X_b = \underbrace{V_{co} }_{N \times l} \underbrace{\alpha_{co}^b}_{l \times pt_{ss}} + \underbrace{V_{c \bar{o}} \alpha_{c \bar{o}}^b}_{N \times pt_{ss}}, 
\end{eqnarray}
and $\alpha_{co}^b$ has full row rank. Thus, when $m = l$, 
\begin{eqnarray}
X_r = X_b \Omega_1 =\underbrace{V_{co}}_{N \times l} \underbrace{\alpha_{co}^b}_{l \times pt_{ss}} \underbrace{\Omega_1}_{pt_{ss} \times l} + V_{c \bar{o}} \alpha_{c \bar{o}}^b \Omega_1, \nonumber \\
= \underbrace{V_{co}}_{N \times l} \underbrace{\alpha_{co}}_{l \times l} + V_{c \bar{o}} \alpha_{c \bar{o}}, 
\end{eqnarray}
where rank $\alpha_{co} = l$. Hence, $X_r$ is a computationally optimal snapshot ensemble. 
\end{proof}

Similarly, we take $n = l$ snapshots by perturbing the adjoint system (\ref{adjoint}) with white noise $v_k$, and the adjoint snapshot ensemble can be written as:
\begin{eqnarray}
Z_r = \underbrace{U_{co}}_{N \times l}  \underbrace{\beta_{co}}_{l \times l} + U_{\bar{c} o} \beta_{\bar{c} o}, 
\end{eqnarray}
where rank $\beta_{co} = l$, and $\beta_{\bar{c}o}$ is a constant matrix. Thus, $Z_r$ is a computationally optimal snapshot ensemble. In Section \ref{RPOD_issue},  we discuss about how to choose $m, n$ and the snapshots in practice. 
\subsection{RPOD$^*$ Algorithm}\label{br}
The RPOD$^*$ algorithm is summarized in Algorithm \ref{RPOD_algo}. 

\begin{algorithm}[bt]
\begin{enumerate}
\item{ Perturb the primal system (\ref{primal}) with white noise $u_k$, collect $m$ snapshots at time step $t_1, t_2, \cdots, t_m$, where $t_m \geq t_{ss} $, $\| A^{t_{ss}} \| \approx 0$, $m \geq l$. Denote the snapshot ensemble $X_r$ as:
\begin{eqnarray}\label{algo_pri}
X_r = \begin{pmatrix} x_1 & x_2 & \cdots & x_m \end{pmatrix} .
\end{eqnarray}
 }
\item{Perturb the adjoint system (\ref{adjoint})  with white noise $v_k$, collect $n$ snapshots at time step $\hat{t}_1, \hat{t}_2, \cdots, \hat{t}_n$, where $\hat{t}_n \geq t_{ss} $, $n \geq l$. Denote the adjoint snapshot ensemble $Z_r$ as:
\begin{eqnarray}\label{algo_adj}
Z_r = \begin{pmatrix} z_1 & z_2 & \cdots & z_n \end{pmatrix}.
\end{eqnarray}
}
\item{Solve the SVD problem: 
\begin{eqnarray}\label{RPOD_SVD}
Z_r' X_r =\begin{pmatrix} L_r & L_o \end{pmatrix} \begin{pmatrix} \Sigma_r & 0 \\ 0 & \Sigma_o \end{pmatrix} \begin{pmatrix} R_r' \\ R_o' \end{pmatrix}, 
\end{eqnarray}
and truncate at $\sigma_l$, where $l$ is the number of controllable and observable modes present in the snapshot ensembles.  $\Sigma_r$ contains  the first $l$ non-zero singular values $\sigma_{1}  \geq  \sigma_2  \geq \cdots, \geq \sigma_{l} > 0$, $(R_r, L_r)$ are the corresponding right and left singular vectors. }
\item {Construct the POD bases: 
\begin{eqnarray}
T_r  = X_r R_r \Sigma_r^{-1/2}, S_r = \Sigma_r^{-1/2} L_r' Z_r'.
\end{eqnarray}}
\item{Construct the ROM $\tilde{A}$, find the eigenvalues $\Lambda_{r}$  and eigenvectors $P_{r}$ of $\tilde{A}$. 
\begin{eqnarray}
\tilde{A} = S_r A T_r = P_r \Lambda_{r} P_{r}^{-1},
\end{eqnarray}}
\item{Construct new POD bases: 
\begin{eqnarray}
\Phi_{r} = P_{r}^{-1} S_r, \Psi_{r} = T_r P_{r}.
\end{eqnarray}}
\item{The ROM is: 
\begin{eqnarray}
A_r = \Phi_{r} A \Psi_{r}, B_r = \Phi_{r} B, C_r = C \Psi_{r}
\end{eqnarray}}
\end{enumerate}
\caption{RPOD$^*$ Algorithm}\label{RPOD_algo}
\end{algorithm}

Under assumptions A1, A2 and A4, the following result holds.

\begin{proposition}\label{P1}
Denote $(A_r, B_r, C_r)$ as the ROM constructed using RPOD$^*$ following Algorithm \ref{RPOD_algo}. If we keep the first $l$ non-zero singular values in (\ref{RPOD_SVD}),  then $\| C_r A_r^i B_r - C A^i B \| \propto O(\epsilon),  i = 1, \cdots $, where $\epsilon$ is a small number defined in assumption A4.
\end{proposition}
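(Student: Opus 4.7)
The plan is to treat Proposition~\ref{P1} as a first-order perturbation of Corollary~\ref{c1}: under A3 the modal ROM recovers the Markov parameters exactly, and under A4 the small leakages $\epsilon C_1,\ldots,\epsilon C_4$ should propagate into an $O(\epsilon)$ deviation of the reduced Markov parameters. First I would rewrite the RPOD$^*$ snapshot ensembles in the full eigenbasis of $A$. Retracing the derivation of Proposition~\ref{Po} without assumption A3, the primal ensemble admits a four-block decomposition whose coefficients on $V_{\bar c o}$ and $V_{\bar c\bar o}$ depend only on $U_{\bar c o}'B = \epsilon C_1$ and $U_{\bar c\bar o}'B = \epsilon C_2$. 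Thus $X_r = X_r^0 + \epsilon\,\Delta X$ with $X_r^0 := V_{co}\alpha_{co} + V_{c\bar o}\alpha_{c\bar o}$ a computationally optimal ensemble in the sense of Definition~3; symmetrically $Z_r = Z_r^0 + \epsilon\,\Delta Z$ with $Z_r^0 := U_{co}\beta_{co} + U_{\bar c o}\beta_{\bar c o}$, the perturbation $\epsilon\,\Delta Z$ arising through $CV_{c\bar o} = \epsilon C_3$ and $CV_{\bar c\bar o} = \epsilon C_4$.

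Next I would propagate the perturbation through the algorithm. Using the biorthogonality $U_i'V_j = \delta_{ij}I$ of the partitioned eigenbasis, the Hankel matrix satisfies $Z_r'X_r = (\beta_{co})'\alpha_{co} + \epsilon E$, where the zeroth-order term has rank exactly $l$ and, by A4, singular values of order $\|C_5\|^2$, while $\|E\|=O(\|C_5\|^2)$. Weyl's inequality then guarantees $l$ dominant singular values of order $\|C_5\|^2$ and the remaining singular values of order $\epsilon\|C_5\|^2$, so truncating at $\sigma_l$ is unambiguous for $\epsilon$ sufficiently small. Wedin's $\sin\Theta$ theorem yields $\|L_r L_r' - L_r^0 (L_r^0)'\| = O(\epsilon)$ and the analogue for $R_r$, so the POD bases $(T_r, S_r)$ and therefore the modal bases differ from the ideal ones by $O(\epsilon)$. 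Repeating the algebra after (\ref{bpodeig}) of Proposition~\ref{Propo_b} with these perturbed bases gives $\Psi_r = V_{co} + V_{c\bar o}\tilde C_6 + O(\epsilon)$ and $\Phi_r = U_{co}' + \tilde C_7 U_{\bar c o}' + O(\epsilon)$. Substituting into $A_r, B_r, C_r$ and using A4 ($U_{\bar c o}'B = \epsilon C_1$ so that $\tilde C_7 U_{\bar c o}' B = O(\epsilon)$, and $CV_{c\bar o} = \epsilon C_3$ so that $CV_{c\bar o}\tilde C_6 = O(\epsilon)$) yields $A_r = \Lambda_{co} + O(\epsilon)$, $B_r = U_{co}'B + O(\epsilon)$, $C_r = CV_{co} + O(\epsilon)$. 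Expansion of $C_r A_r^i B_r$ and comparison with $CV_{co}\Lambda_{co}^i U_{co}'B = CA^iB$ (Proposition~\ref{Propo_b}) then gives the claim.

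The main obstacle is controlling the hidden constants, both in the Wedin step and in the iterate $i$. For Wedin I would argue that the gap between $\sigma_l$ (of order $\|C_5\|^2$) and $\sigma_{l+1}$ (of order $\epsilon\|C_5\|^2$) is not closed for $\epsilon$ sufficiently small, making the bound on the dominant subspace uniform; this is essentially where A2 and the implicit non-degeneracy of $(\Lambda_{co}, U_{co}, V_{co})$ enter, since the $l$ controllable and observable modes must themselves remain well-conditioned. For the dependence on $i$, since $A_r$ is a small perturbation of $\Lambda_{co}$, which lies strictly inside the unit disk by A1, one has $\|A_r^i\| \leq M\rho^i$ for some $\rho<1$ uniformly in $i$, so the cumulative error $\|C_r A_r^i B_r - CA^i B\|$ remains $O(\epsilon)$ with a constant independent of $i$.
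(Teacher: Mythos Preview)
Your proposal is correct and follows essentially the same perturbation-theoretic route as the paper's proof: write $X_r,Z_r$ as the ideal ensembles $X_r^0,Z_r^0$ plus $O(\epsilon)$ corrections, so that $Z_r'X_r=\beta_{co}'\alpha_{co}+\epsilon E$, then push the perturbation through the rank-$l$ SVD and the eigendecomposition of $\tilde A$ to obtain $\Psi_r,\Phi_r$ within $O(\epsilon)$ of their ideal counterparts, and conclude. The only differences are cosmetic: where the paper cites generic first-order singular-value/vector expansions and eigenvector perturbation bounds, you invoke Weyl and Wedin by name and are somewhat more explicit about the gap hypothesis and the uniformity of the constant in $i$ (which the paper's appendix leaves implicit).
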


The proof of Proposition \ref{P1} uses perturbation theory \cite{perturbation1,perturbation2} to extend  the proof of the idealized Proposition \ref{Propo_b} such that A4 holds instead of A3. Under assumption A4, the actual snapshot ensembles can be written as:
\begin{eqnarray}\label{snap_p1}
X_r = \begin{pmatrix} V_{co} & V_{c \bar{o}} & V_{\bar{c} o} & V_{\bar{c} \bar{o}} \end{pmatrix} \begin{pmatrix} \alpha_{co} \\ \alpha_{c \bar{o}} \\ \delta \alpha_{\bar{c}o} \\ \delta \alpha_{\bar{c} \bar{o}} \end{pmatrix},
\end{eqnarray}
where $\delta \alpha_{\bar{c} o}  = \epsilon \alpha_{\bar{c} o}$, $\delta \alpha_{\bar{c} \bar{o}} = \epsilon \alpha_{\bar{c} \bar{o}}$ and $\epsilon$ is small. 
Therefore, $\| V_{\bar{c} o } \delta \alpha_{\bar{c}o} + V_{\bar{c} \bar{o}} \delta \alpha_{\bar{c} \bar{o}} \| = O(\epsilon)$ are small perturbations of the ideal snapshot ensemble, and we can expect the ideal result to be perturbed by a small amount as well. 

The formal proof is shown in Appendex \ref{AP1}.
\begin{corollary}\label{cr2}
$\epsilon$ is assumed to be a small number in assumption A4, and $\epsilon$ can also be related to $\sigma_{l+1}$ as follows. 
\begin{eqnarray}
\| C_r A_r^i B_r - C A^i B \| \propto O(\epsilon) \propto O(\sigma_{l+1}). 
\end{eqnarray}
\end{corollary}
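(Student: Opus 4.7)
The plan is to leverage Proposition \ref{P1} (which already establishes the $O(\epsilon)$ bound on the Markov parameter error) together with a perturbation argument on the singular values of $Z_r' X_r$ to identify $\epsilon$ with $\sigma_{l+1}$.

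First, following the decomposition used in the proof of Proposition \ref{P1}, I would write the data matrix as
\begin{eqnarray}
Z_r' X_r = (\beta_{co})' \alpha_{co} + E,
\end{eqnarray}
where $(\beta_{co})' \alpha_{co}$ is the ideal rank-$l$ matrix arising from the controllable/observable subspace, and $E$ collects every cross term involving $V_{\bar c o}\delta\alpha_{\bar c o}$, $V_{\bar c \bar o}\delta\alpha_{\bar c \bar o}$, $U_{\bar c o}'$, etc. By the decomposition (\ref{snap_p1}) and its adjoint analogue together with assumption A4, each contributing term scales linearly with $\epsilon$, so $\|E\| = O(\epsilon)$ with a constant that depends on $\|C_1\|,\ldots,\|C_5\|$ but not on $\epsilon$.

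Next, I would invoke Weyl's inequality for singular values. Since $(\beta_{co})'\alpha_{co}$ has rank exactly $l$, its $(l+1)$-th singular value is zero, and Weyl's bound yields
\begin{eqnarray}
\sigma_{l+1} \;=\; |\sigma_{l+1}(Z_r'X_r) - \sigma_{l+1}((\beta_{co})'\alpha_{co})| \;\le\; \|E\| \;=\; O(\epsilon),
\end{eqnarray}
which gives one direction. Combined with Proposition \ref{P1} this already provides $\|C_r A_r^i B_r - C A^i B\| = O(\sigma_{l+1}) + O(\epsilon)$, but we want both quantities to be proportional.

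For the converse direction $\epsilon = O(\sigma_{l+1})$, I would argue as follows. Parametrize the perturbation by an overall scale $\epsilon$ (the $C_1,\ldots,C_4$ are fixed matrices by A4), so that $E = \epsilon E_0$ with $E_0$ independent of $\epsilon$. Then $Z_r' X_r = (\beta_{co})'\alpha_{co} + \epsilon E_0$, and a standard analytic perturbation expansion of the singular values about the rank-$l$ base matrix shows $\sigma_{l+1}(Z_r'X_r) = \epsilon\,\sigma_{\max}(\Pi_{l} E_0 \Pi_l) + O(\epsilon^2)$, where $\Pi_l, \Pi_l$ are projections onto the orthogonal complements of the leading left/right singular subspaces of $(\beta_{co})'\alpha_{co}$. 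Generically (and in particular whenever $U_{\bar c o}'B$ or $CV_{c \bar o}$ excite a direction outside the dominant subspaces), this leading coefficient is bounded away from zero, which gives $\epsilon = O(\sigma_{l+1})$ as claimed. Chaining the two directions with Proposition \ref{P1} yields the corollary.

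The main obstacle will be the converse direction: ensuring the leading-order perturbation coefficient in $\sigma_{l+1}$ does not accidentally vanish due to structural cancellations in $E_0$. This would require the mild nondegeneracy hypothesis implicit in A4, namely that the $C_1,\ldots,C_4$ are generic (their projections onto the non-dominant subspaces of $(\beta_{co})'\alpha_{co}$ do not conspire to cancel). Under that hypothesis the proportionality is genuine; otherwise one only retains the one-sided bound $\sigma_{l+1} = O(\epsilon)$, which is already enough to use $\sigma_{l+1}$ as a practical, computable surrogate for $\epsilon$ when truncating the SVD in step 3 of Algorithm \ref{RPOD_algo}.
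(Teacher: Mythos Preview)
Your proposal is correct and follows essentially the same route as the paper: write $Z_r'X_r$ as the rank-$l$ ideal matrix $(\beta_{co})'\alpha_{co}$ plus an $O(\epsilon)$ perturbation $E$, then use first-order singular value perturbation theory to conclude that the $(l{+}1)$-th singular value, which vanishes for the ideal matrix, satisfies $\sigma_{l+1}=\epsilon\cdot(\text{constant})+O(\epsilon^2)$. The paper simply quotes the explicit perturbation formula $\sigma_i=\epsilon\sqrt{\lambda_i(\bar R_o' E_1'\bar L_o\bar L_o' E_1\bar R_o)}$ for the zero singular values and reads off $\sigma_{l+1}\propto\epsilon$ directly, whereas you separate the two directions, using Weyl's inequality for $\sigma_{l+1}=O(\epsilon)$ and the analytic expansion for the converse; this is a cosmetic difference. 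Your explicit acknowledgment that the converse requires the leading coefficient $\sigma_{\max}(\Pi_l E_0\Pi_l)$ to be nonzero is in fact more careful than the paper, which tacitly assumes the coefficient $e_{l+1}$ in $\sigma_{l+1}=e_{l+1}\epsilon$ does not vanish.
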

The proof is shown in Appendex \ref{AP2}.

\subsection{Implementation Issues}\label{RPOD_issue}
Here we discuss some  implementation problems in the RPOD$^*$ algorithm.  We give the insight into how to collect the snapshot ensembles, and how to select the ROM size. 

\textbf{Snapshot selection}
From the analysis in Section \ref{opt_snapshot},  we only need to collect $m = l$ snapshots from the primal simulations. However, $l$ is not known a priori, thus, in practice, we start with a random guess $m <<N$, where $N$ is the dimension of the system, or we can choose $m$ from experience. For instance, in a fluid system with $10^6$ degrees of freedom, $m$ is $O(10) \sim  O(10^2)$. Similarly, we guess $n$, and then we check the rank of $Z_r'X_r$. If $Z_r'X_r$ has full rank, i.e., rank $(Z_r'X_r) = \min{(m, n)}$,  then it is possible that we did not take enough snapshots, and hence, we increase $m, n$, until rank $(Z_r'X_r) < \min{(m,n)}$. 

For the primal simulation, we take $m$ snapshots from one simulation with zero initial condition, and white noise perturbation $u(k)$. We assume that the snapshots are taken at $\Delta T, 2 \Delta T \cdots, m \Delta T$, WLOG. Here, $\Delta T $ is a small constant, and we require that $m \Delta T \geq t_{ss}$, where $ \| A^{t_{ss}} \| \approx 0$. In Fig. \ref{snapshot}, we show one simulation result comparing the accuracy of the ROMs using $\Delta T = 3, 5, 10, 20, 50$ for the atmospheric dispersion problem introduced in Section \ref{atmos_pro}.  Here $t_{ss} = 900$, and $m = 300$. The output relative error is defined in (\ref{def_err}), and we take the average of the output relative error for each ROM. It can be seen that  as $\Delta T$ increases, each column in $\Omega_1$ is well separated, and hence, the ROM is more accurate, while it takes longer time to generate the snapshots. Thus, this is a trade-off between the accuracy and the computational efficiency.

\begin{figure}[bt]
\centering
\includegraphics[width=1.67in]{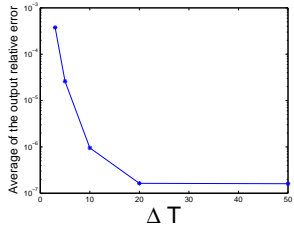}
\caption{Snapshot Selection: Effect of $\Delta T$}
\label{snapshot}
\end{figure}

\textbf{ROM size selection}
 In Proposition \ref{P1}, we assume that there are $l$ controllable and observable modes, and we keep exact $l$ non-zero singular values. However, $l$ is not known as a priori. We prove in \cite{RPODstar} that if  we keep $k$ non-zero singular values, and $k > l$, then undesired noise will be introduced. If $k < l$, not all the controllable and observable modes can be recovered. Therefore, in practice, we decide $l$ by trial and error. Since $\text{ rank } (Z_r'X_r) \geq l$ is always true, we start with $k = \text{ rank } (Z_r'X_r)$, and check the eigenvalues of $\tilde{A} = S A T$. From the development in \cite{RPODstar},  we can see that when $k > l$,  $ k - l$ eigenvalues of $\tilde{A}$ are small, which are the perturbations of the zero eigenvalues.  If $k > > l$, then the perturbations  in the ROM is too large to be neglected, which results in unstable eigenvalues of $\tilde{A}$. Thus, we keep decreasing the value of $k$ until $\tilde{A}$ is stable. If there are some small eigenvalues which are approximately  zero,  we know $k > l$, and we decrease the value of $k$ until we reach a region $[l ,  l+a]$, where $a$ is a small number, such that most of the eigenvalues of $\tilde{A}$ remain the same for different value of $k$ ($l$ controllable and observable modes with $k-l$ perturbations of the zero eigenvalues), then we stop and pick the number $l$ as the  number of non-zero eigenvalues of $\tilde{A}$. 

\section{Comparison with related Algorithms}\label{output}

In this section, we compare the RPOD$^*$ algorithm with BPOD, random projection and BPOD output projection algorithm. 
\subsection{Comparison with BPOD}\label{comp_bpod}
First, we summarize the differences between the BPOD and RPOD$^*$ algorithm. 

As we mentioned in Section \ref{BPOD_sec}, $p + q$ simulations are needed for BPOD algorithm, where $p$ is the number of inputs and $q$ is the number of outputs. Let $X_b$, $Z_b$ denote  the impulse response snapshot ensembles that need to be collected in BPOD algorithm from time step $(1, \cdots , t_{ss})$. Thus, $X_b \in \Re^{N \times pt_{ss}}, Z_b \in \Re^{N \times qt_{ss}}$. It is expensive to store $(p + q) t_{ss}$ snapshots, and it is expensive to solve the resulting SVD problem due to the large size of the problem. For RPOD$^*$ algorithm, only (1 primal + 1 adjoint) simulations are needed, and only $m + n$, where $m, n \ll t_{ss}$ snapshots need to be stored. Also, it is easy to solve the resulting SVD problem.  

Another practical problem with impulse responses snapshots is that the snapshots after some time are dominated by very few slow modes, and including these snapshots does not give much new information. On the other hand, the RPOD$^*$ trajectories are white noise forced, and all the modes are always present in all the snapshots due to the persistent excitation of the white noise terms. Hence, the RPOD$^*$ snapshots can be taken till $t_{ss}$, and be assured that all of the relevant modes will be captured.

\subsection{Comparison with Random Projection}\label{comp_rand}
From the analysis in Section \ref{opt_snapshot},  we see that the snapshot ensembles collected in RPOD$^*$ can be written as:
\begin{eqnarray}
X_r = X_b \Omega_1, Z_r = Z_b \Omega_2, 
\end{eqnarray}
where $X_b \in \Re^{N \times pt_{ss}}, Z_b \in \Re^{N \times qt_{ss}}$ are the impulse response snapshot ensembles that need to be collected in the BPOD algorithm from time step $(0, \cdots, t_{ss} -1)$, and $\| A^{t_{ss}} \| \approx 0$.  $\Omega_1 \in \Re^{pt_{ss} \times m}$ and $\Omega_2 \in \Re^{qt_{ss} \times n}$ are full rank matrices. We have:
\begin{eqnarray}
\underbrace{H_r}_{\text{rank } l} = Z_r' X_r = \Omega_2' {Z}_b' {X}_b \Omega_1 = \Omega_2' \underbrace{{H}_b}_{\text{rank } l} \Omega_1,
\end{eqnarray}

There is a significant difference  between the proposed algorithm and a direct application of the random projection algorithm on BPOD. A direct application of the random projection would require to generate the Hankel matrix ${H}_b$ (and ${X}_b, {Z}_b$) first. However, in practice, the construction and the storage of the Hankel matrix is computationally prohibitive when $N$ is large and the number of inputs/outputs is large. Also, the bottleneck of the random projection algorithm is the projection of ${X}_b, {Z}_b$ onto the Gaussian test matrices. In the proposed algorithm, the snapshot ensembles are constructed directly from the primal and adjoint simulations, and hence, the computational cost to generate the Hankel matrix and to project it onto the Gaussian test matrices is saved. 

\subsection{Comparison with BPOD output projection}\label{comp_bpodp}
As we mentioned in Section \ref{BPOD_sec}, when the number of the outputs $q$ is large, the computation of the BPOD adjoint simulations may not be tractable. To reduce the number of the outputs, the output projection method in \cite{rowley1, BPODprojection} is proposed. In this section, we compare the RPOD$^*$ algorithm with BPOD output projection algorithm. First, we briefly review the BPOD output projection method in Algorithm \ref{BPODp_algo}. 

\begin{algorithm}[bt]
\begin{enumerate}
\item Collect the primal snapshot ensemble $X_b$ in (5).
\item Compute the POD modes of the dataset $Y_b = CX_b$.
\begin{eqnarray}
Y_b'Y_b  \phi_k = \lambda_k \phi_k, \lambda_1 \geq \cdots \geq \lambda_n \geq 0, 
\end{eqnarray}
where $\lambda_k$ are the eigenvalues, and $\phi_k$ are the corresponding eigenvectors. Thus, the POD modes $\Theta_s  = [\phi_1, \cdots, \phi_s]$, where $s$ is the rank of the output projection, and $s \ll q$. 
\item Collect the impulse responses of the adjoint system:
\begin{eqnarray}\label{outputprojection}
z_{k+1} = A' z_k + C' \Theta_s v,  w = B' z_k.
\end{eqnarray}
The adjoint snapshot ensemble is denoted as $Z_s$.
\item Construct the Hankel matrix:
\begin{eqnarray}
H_s = Z_s' X_b.
\end{eqnarray}
\item Solve the SVD problem of $H_s$, and construct the BPOD output projection bases as $(T_s, S_s)$ using equations (8) and (9). The ROM is:
\begin{eqnarray}
A_s = S_s A T_s, B_s = S_s B, C_s = C T_s.
\end{eqnarray}
\end{enumerate}
\caption{BPOD output projection algorithm}\label{BPODp_algo}
\end{algorithm}
In the following, we compare the BPOD output projection method with RPOD$^*$ algorithm. From (\ref{outputprojection}), the adjoint snapshote ensemble $Z_s$ can be written as:
\begin{eqnarray}
\underbrace{Z_s}_{N \times st_{ss}} = \begin{pmatrix} C' \Theta_s & A' C' \Theta_s & \cdots & (A')^{t_{ss}} C' \Theta_s \end{pmatrix} \nonumber \\
= \underbrace{\begin{pmatrix} C' & A'C' & \cdots & (A')^{t_{ss}} C' \end{pmatrix}}_{Z_b} \underbrace{\begin{pmatrix} \Theta_s & & & \\ & \Theta_s & & \\ & & \cdots & \\ & & & \Theta_s \end{pmatrix}}_{\Theta} \nonumber \\
= \underbrace{Z_b}_{N \times qt_{ss}} \underbrace{\Theta}_{q t_{ss} \times s t_{ss}}.
\end{eqnarray}
And hence, the projected Hankel matrix $H_s$ is:
\begin{eqnarray}
H_s = \Theta'  Z_b' X_b = \Theta' H_b,
\end{eqnarray}
Recall that the adjoint snapshot ensembles collected in RPOD$^*$ can be written as $Z_r = Z_b \Omega_2$, and the projected Hankel matrix in RPOD$^*$ is $H_r= \Omega_2' H_b \Omega_1$. Thus, we make the following remark. 
\begin{remark}
Both the BPOD output projection and RPOD* algorithms can be viewed as projecting the full order Hankel matrix onto a reduced order Hankel matrix with projection matrices $\Theta$ and $(\Omega_1, \Omega_2)$. 
\end{remark}

\subsubsection{Differences between two algorithms}

First, the information preserved in $H_s, H_r$ are not the same. The output projection $P_s = \Theta_s \Theta_s'$ minimizes the 2-norm of the difference between the original transfer function and the output-projected transfer function, i.e. $\| C A^{i} B - \Theta_s \Theta_s' CA^{i} B \|, i = 1, \cdots, $ is minimized. Thus, the controllable and observable modes preserved in $H_s$ are an approximation of those in $H_b$, while in RPOD$^*$ algorithm, the exact  controllable and observable modes are preserved using the Gaussian random projection matrix. As mentioned in \cite{BPODprojection}, when $s < l$, where $l$ is the number of non-zero Hankel singular values (number of controllable and observable modes), then only the first $s$ Hankel singular values are the same as the full balanced truncations Hankel singular values. 

Another difference between output projection algorithm and RPOD$^*$ algorithm is that RPOD$^*$ algorithm can be used when both the number of the inputs and outputs are large, while output projection can be used when the number of the inputs or the outputs is large. When the number of inputs is large, we can construct an input projection using the adjoint snapshot ensemble, but when both the number of inputs and outputs are large, the construction of the projection matrix is not helpful. 

\subsubsection{Comparison of  computational cost}
The comparison of the  computational cost of the BPOD output projection algorithm with RPOD$^*$ algorithm is shown in Table \ref{compro}. We collect $m, n$ snapshots for RPOD$^*$ algorithm respectively,  and $t_{ss}$ snapshots for BPOD output projection algorithm. $N$ is the dimension of the system, $p, q$ are the number of inputs and outputs respectively, the rank of the output projection is $s$, and without loss of generality, we assume $p \leq q$, $m \leq n$, and $p \leq s$. 
\begin{table}
\caption{Computational Complexity Analysis for RPOD$^*$ and BPOD Output Projection}\label{compro}
\begin{tabular}{l|l|l}
\hline
& RPOD* & BPOD output projection \\
Construction of $H$ & $O(mnN)$ &  $O(pst_{ss}^2 N)$\\
\hline
Solve SVD & $O(m^2n)$ & $O(p^2 st_{ss}^3)$ \\
\hline
\end{tabular}
\end{table}


Now we compare the computation time to generate the snapshot ensembles. If we denote $T$ as the time to propagate the primal/adjoint system once, then for BPOD output projection algorithm, $(p + s)$ simulations are needed, and for each simulation, we need to collect the snapshots up to $t = t_{ss}$.  Thus, the total computation time to generate the snapshot ensembles is $(p+s) t_{ss}T$. 

The RPOD$^*$ algorithm needs 1 primal and 1 adjoint simulation till time $ m \Delta T, n \Delta T$ respectively, where $m \Delta T \geq t_{ss}, n \Delta T \geq t_{ss}$. When we choose $\Delta T$ such that $m \Delta T = t_{ss}, n \Delta T = t_{ss}$, the performance of the ROM constructed using RPOD$^*$ is better than those constructed using BPOD/BPOD output projection algorithm. While, the total computation time to generate the snapshot ensembles is $2t_{ss}T$, which means that RPOD$^*$ computational cost is  the same as BPOD in a single input single output (SISO) system. The comparison of the computation time to generate the snapshot ensembles is shown in Table \ref{time} for two examples.

\section{Computational Results}\label{example}
In this section, we show the comparison of RPOD$^*$ with BPOD for two examples: a one-dimensional heat transfer problem, and a three dimensional atmospheric dispersion problem to illustrate the proposed algorithm. 

First, we define the output relative error:
\begin{eqnarray}\label{def_err}
E_{output} = \frac{\| Y_{true} - Y_{rom} \|}{\| Y_{true} \| },
\end{eqnarray}
where $Y_{true}$ are the outputs of the full order system, and $Y_{rom}$ are the outputs of the reduced order system.

The frequency response for multiple input multiple output systems can be represented by plotting the maximum singular value of the transfer function matrix $\max(\sigma(H(j \omega)))$ as a function of frequency $\omega$. We define the frequency responses error as:
\begin{eqnarray}
E_{fre} (j \omega)= | \max(\sigma(H_{true} (j \omega))) - \max(\sigma( H_{rom}(j \omega)))|,
\end{eqnarray}
where $H_{true}(j \omega)$ is the transfer function of the full order system, and $H_{rom}(j \omega)$ is the transfer function of the ROM. 

For the two experiments below, there are design parameters that need to be chosen manually, such as the rank of the output projection, when to take the snapshots, and the size of the ROM. We  have shown the results of the best selections of these parameters for the BPOD/BPOD output projection algorithms and the RPOD$^*$ algorithm.
\subsection{ Heat Transfer}
The equation for heat transfer by conduction along a slab is given by the partial differential equation:

\begin{equation}
\frac{\partial T}{\partial t} = \alpha \frac{\partial^2 T}{\partial x^2}+f,
\end{equation}\\
 with boundary conditions
\begin{eqnarray}
T |_{x = 0} = 0,  \frac{\partial T}{\partial x} |_{x=L} = 0,
\end{eqnarray}
where $\alpha$ is the thermal diffusivity, and $f$ is the forcing.

Two point sources are located at $x = 0.15m $ and $x = 0.45m$. The system is discretized using finite difference method, and there are 100 grids which are equally spaced.  We take full field measurements, i.e., measurements at every node. The parameters of the system are summarized in Table \ref{parameters}.  In the following, we compare the ROM constructed using RPOD$^*$, BPOD and BPOD output projection.

For RPOD$^*$, the system is perturbed by the white noise with distribution $N(0, I_{2 \times 2})$. At time $t_{ss} = 3000s$, $ \| A^{t_{ss}}\| \approx 0$, thus, for the primal/adjoint simulation, one realization is needed, and we collect 80 equispaced snapshots during time $t \in [0, 3200]s$. For BPOD, we need $p = 2$ realizations for the primal simulation, and $q = 100$ realizations for the adjoint simulations. In general, 3000 equispaced snapshots between $[0, 3000]s$ should be taken in each realization for BPOD/output projection. However, due to the memory limits on the platform, only 400 equispaced snapshots can be taken and for the optimal performance of BPOD/output projection, the 400 equispaced snapshots are taken from time $t \in [0, 400]s$ (first 400 dominant impulse responses). The rank of the output projection is 40, and hence, for the BPOD output projection algorithm, 40 realizations are needed for the adjoint simulations. RPOD$^*$ algorithm and BPOD algorithm extract 70 modes, and BPOD output projection algorithm extract 50 modes. Here, we take 80 snapshots by trial and error following the procedure in Section \ref{RPOD_issue}. 
\begin{table*}[t]
\caption{Parameters of system}
\centering
 \begin{tabular}{c|c|p{4cm}|p{6cm}|}
\hline
             & & Heat & Atmospheric Dispersion\\
\hline

\multirow{5}{*}{System Parameters}
& Domain & $x \in [0, 1] (m)$ & $x \in [0, 2000] (m)$, $y \in [-100, 400] (m),$ $z \in  [0, 50] (m)$ \\
\cline{2-4}
& Dimension of system &  $N = 100$  &  $N = 10^5$ \\
\cline{2-4}
& Parameters &$\alpha = 4.2 \times 10^{-6} (m^2/s)$ & Wind velocity $\vec{u} = (4m/s, 0, 0 )$\\
\cline{2-4}
& Number of Inputs & 2 & 10\\
\cline{2-4}
& Number of Outputs & 100 & 810\\
\hline 
\multirow{5}{*}{output projection V.S. RPOD$^*$}
& Primal Snapshots & 400 V.S. 80 & 200 V.S. 400  \\
\cline{2-4}
& Adjoint Snapshots &  400 V.S. 80 & 200 V.S.  400\\
\cline{2-4}
& Snapshots taken during & $t \in [0, 400s]$ V.S. $t \in [0s,3200s]$ & $t \in [0, 200]s$ V.S. $ t \in [0, 4000]s$ \\
\cline{2-4}
& ROM modes& 50 V.S. 70 & 200 V.S. 380\\
\cline{2-4}
& Hankel matrix & $16000 \times 800$ V.S. $80 \times 80$ & $4000 \times 2000$ V.S. $ 400 \times 400$ \\
\hline
        \end{tabular}
\label{parameters}
\end{table*}

In Fig. \ref{heat_com}(a), we compare the norm of the Markov parameters of the ROM constructed using three algorithms with the true Markov parameters of the full order system.  We perturb the system with random Gaussian noise, and compare the output relative errors in Fig. \ref{heat_com}(b). In Fig. \ref{heat_frequency}(a), we compare the frequency responses of the ROM constructed using three algorithms with the true frequency responses. In Fig. \ref{heat_frequency}(b), we plot the error of the maximum singular value of the input-output model as a function of frequency. 

\begin{figure}[tb]
\centering
\subfigure[Comparison of Markov Parameters]{
\includegraphics[width=1.67in]{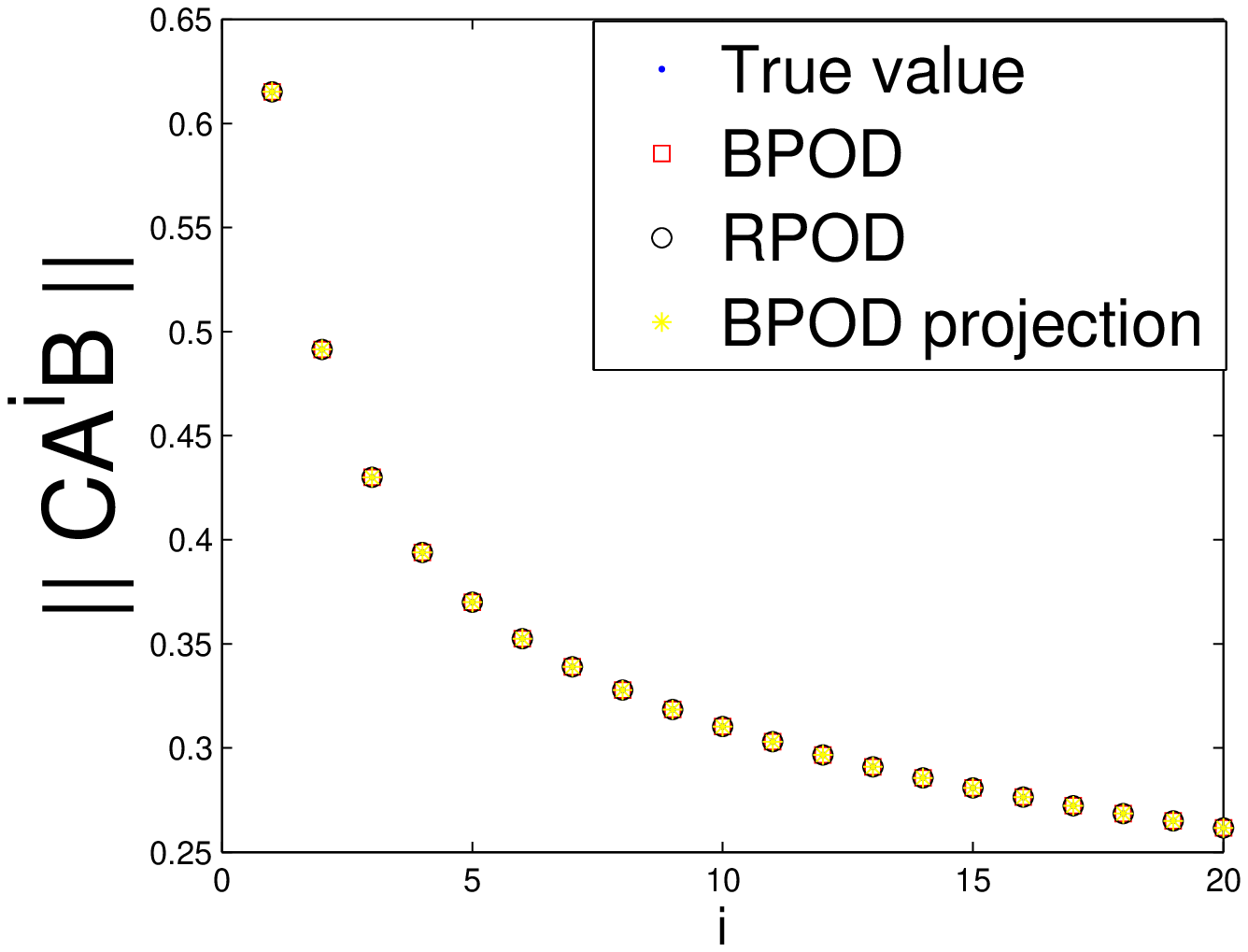}}
\subfigure[Comparison of  Output Relative Errors]{\includegraphics[width=1.67 in]{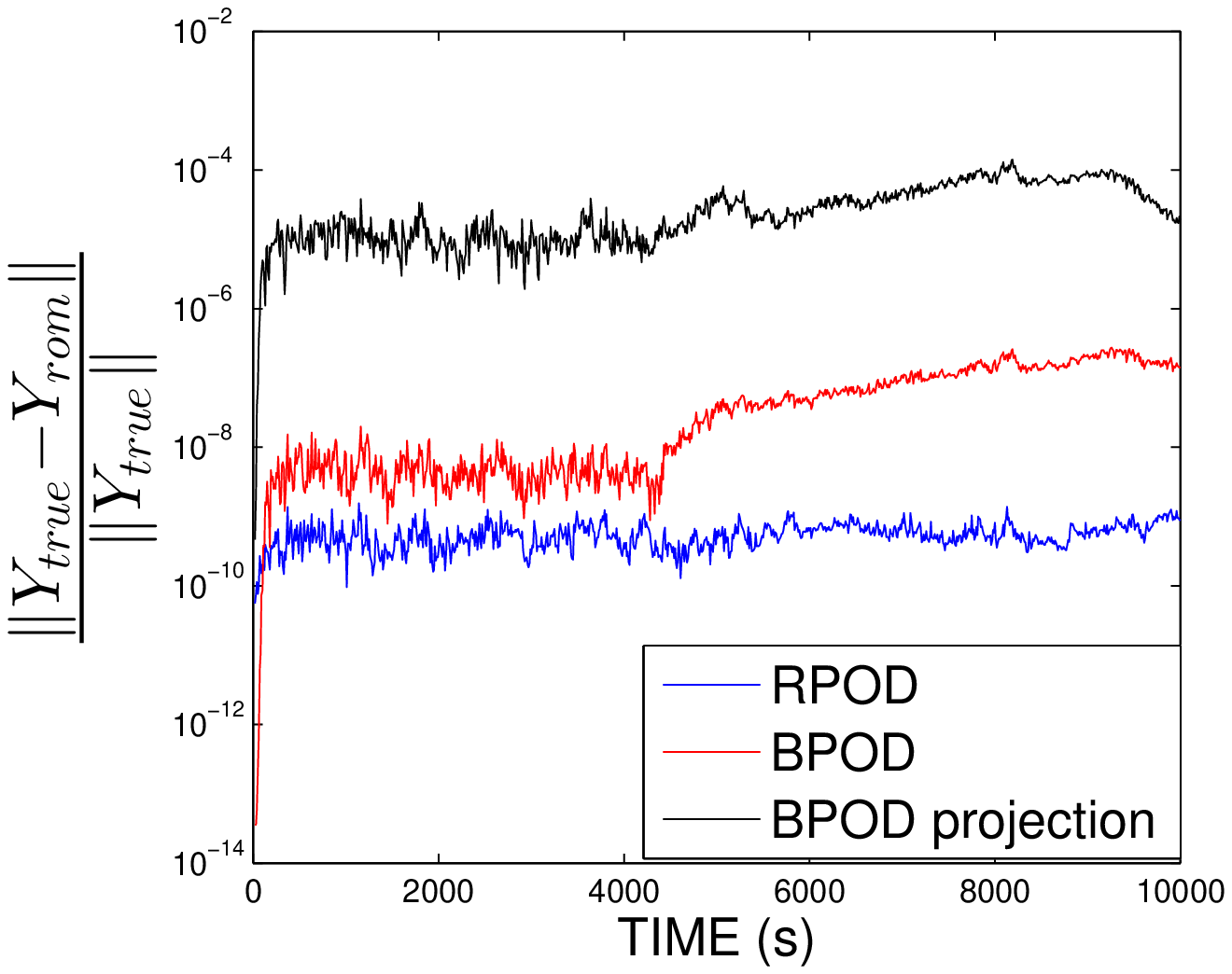}}
\caption{Heat Transfer Problem}
\label{heat_com}
\end{figure}

\begin{figure}[tb]
\centering
\subfigure[Comparison of Frequency Responses]{
\includegraphics[width = 1.67in]{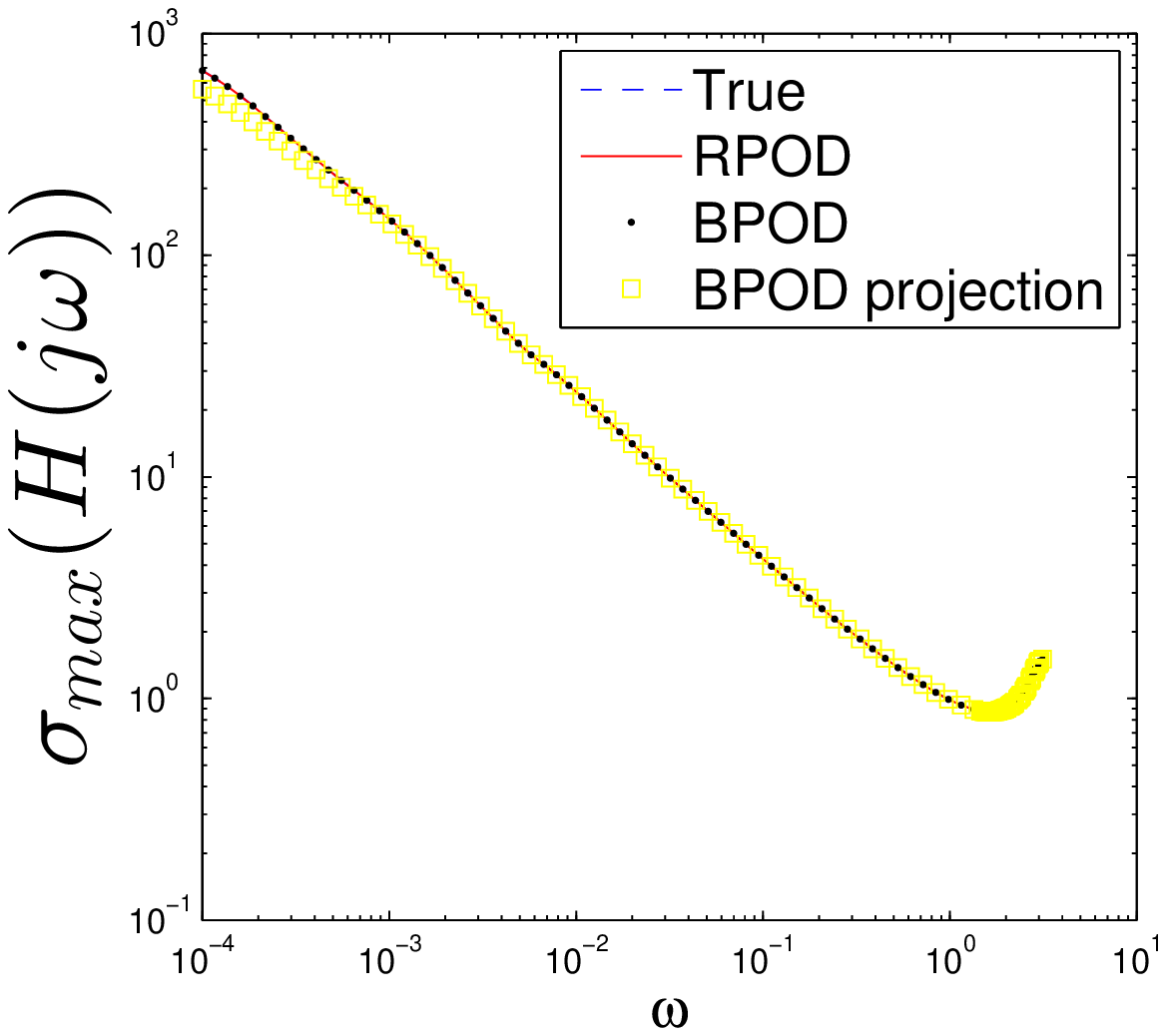}}
\subfigure[Comparison of  Frequency Responses Errors]{\includegraphics[width= 1.67in]{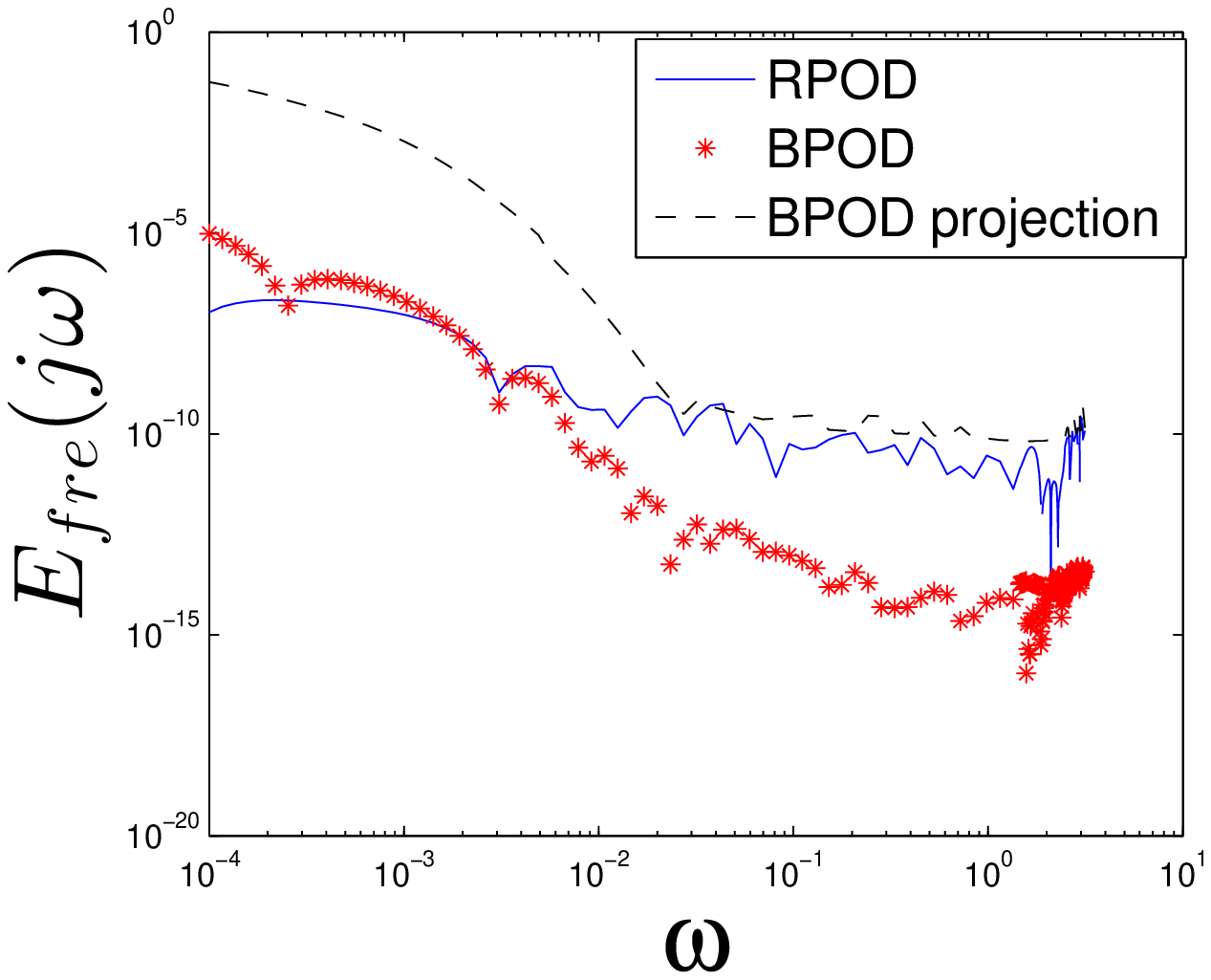}}
\caption{Heat Transfer Problem}
\label{heat_frequency}
\end{figure}

For all three methods, we can see that the Markov parameters of the ROM are close to the true Markov parameters of the full order system. From Fig. \ref{heat_com}(b), it can be seen that  all three methods are accurate enough. The output relative error of BPOD output projection  is less than $0.01 \%$, and the performance of the RPOD$^*$ algorithm is much better than the BPOD output projection algorithm, the performance of RPOD$^*$ algorithm is better than BPOD algorithm because we do not take all the snapshots up to $t_{ss}$ due to the memory limits. From Fig. \ref{heat_frequency}, we can see that the frequency responses error of RPOD$^*$ algorithm is  smaller than BPOD and BPOD output projection algorithm in low frequencies. With the increase of the frequency, the BPOD algorithm performs better than RPOD$^*$ algorithm, however, the errors are below $10^{-10}$, and hence, the difference is negligible. The comparison of computational time of RPOD$^*$ and BPOD output projection algorithm  is shown in Section \ref{com_time}. We can see that the construction of the snapshot ensembles using RPOD$^*$  takes almost the same time as BPOD output projection, while the dominant computational cost is solving the SVD problem, and it can be seen that RPOD$^*$ is about 24 times faster than BPOD output projection. 

\subsection{ Atmospheric Dispersion Problem}\label{atmos_pro}
The three-dimensional advection-diffusion equation describing the contaminant transport in the atmosphere is: 
\begin{eqnarray}
\frac{\partial c}{ \partial t} + \nabla \cdot (c \vec{u})  = \nabla \cdot (K(\vec{X}) \nabla c) + Q \delta( \vec{X} - \vec{X_s}),
\end{eqnarray}
where 

$c(\vec{X}, t)$ : mass concentration at location $\vec{X} = (x, y, z)$.

$\vec{X_s}  = (x_s, y_s, z_s)$:  location of the point source. 

$\vec{u}= (ucos(\alpha), usin(\alpha), 0)$:  wind velocity. $\alpha$ is the direction of the wind in the horizontal plane and the wind velocity is aligned with the positive $x$-axis when $\alpha = 0$, $u \geq 0$ is constant.

 $Q$:  contaminant emitted rate. 

$\nabla$:  gradient operator. 

$K(\vec{X}) = diag (K_x(x), K_y (x) , K_z (x))$ : diagonal matrix whose entries are the turbulent eddy diffusivities. In general $K(\vec{X})$ is a function of the downwind distance $x$ only.

In practice, the wind velocity is sufficiently large that the diffusion in the $x$-direction is much smaller than advection, and hence, assume that the term $K_x \partial_x^2 c$ can be neglected.  

Define $\sigma_{y}^2 (x) = \frac{2}{u} \int_{0}^x K_{y} (\eta) d \eta$, $\sigma_{z}^2 (x) = \frac{2}{u} \int_{0}^x K_{z} (\eta) d \eta $, where $\sigma_{y} (x) = a_{y} x (1 + b_{y} x)^{0.5}$, $\sigma_{z} (x) = a_{z} x (1 + b_{z} x)^{0.5}$, and $a_y = 0.008, b_y = 0.00001, a_z = 0.006,  b_z = 0.00015$.

The boundary conditions are:
\begin{eqnarray}
c(0, y, z) = 0, c(\infty, y, z) = 0, c(x, \pm \infty, z) = 0, \nonumber \\
c(x, y, \infty) = 0, K_z \frac{\partial c}{\partial z} (x, y, 0) = 0.
\end{eqnarray}

The system is discretized using finite difference method, and there are $100 \times 100 \times 10$ grids which are equally spaced. The parameters are summarized in Table \ref{parameters}. There are 10 point sources which are shown in Fig. \ref{atmo_com}. We take the full field measurements (except the nodes on $x = 0, \infty$ and $ y = \pm \infty$). In Fig. \ref{atmo_com}, we show the actual concentration of the full field at time $t = 200s$ with $Q$ as Gaussian white noise where sources are the dotted points in the figure.

In this example, since the system dimension is $N = 10^5$, constructing the ROM with the full field measurements using BPOD is computationally impossible, and thus, we only compare the RPOD$^*$ algorithm with BPOD output projection algorithm. 

For RPOD$^*$ algorithm, we collect the snapshots sequentially. The system is perturbed by white noise with distribution $N(0, I_{10 \times 10})$. One primal simulation and one adjoint simulation  are needed. We collect 400 equispaced snapshots  from time $t \in [0, 4000]s$, where at time $t_{ss} = 4000$,  $\|A^{t_{ss}}\| \approx 0$, and extract 380 modes. For BPOD output projection algorithm, we collect the impulse responses from the primal simulations, and $p = 10$ realizations are needed. Same as the heat example, we could not collect all the impulse responses from $t \in [0, 4000]s$ due to the memory limits  on the platform. Hence, for the best performance available in this example,  we collect 200 equispaced snapshots from $t \in [0, 200]s$ for each primal simulation. The rank of the output projection is 80, and hence, 80 adjoint simulations are needed, and in the adjoint simulations, we collect 50 equispaced snapshots from  $t \in [0, 50]s$. We can extract 200 modes. The parameters are summarized in Table \ref{parameters}.  

\begin{figure}[tb]
\centering
\includegraphics[width=1.67in]{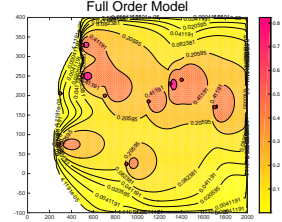}
\caption{Concentration Field at $t=200s$}
\label{atmo_com}
\end{figure}

In Fig. \ref{atmo_impulse}(a), we compare the Markov parameters of the ROM constructed using RPOD$^*$ and BPOD output projection with the full order system. Also, we perturb the system with random Gaussian noise, and compare the output relative errors in Fig. \ref{atmo_impulse}(b). The comparison of the computational time is shown in Section \ref{com_time}.

\begin{figure}[tb]
\centering
\subfigure[Comparison of Markov Parameters]{
\includegraphics[width=1.67in]{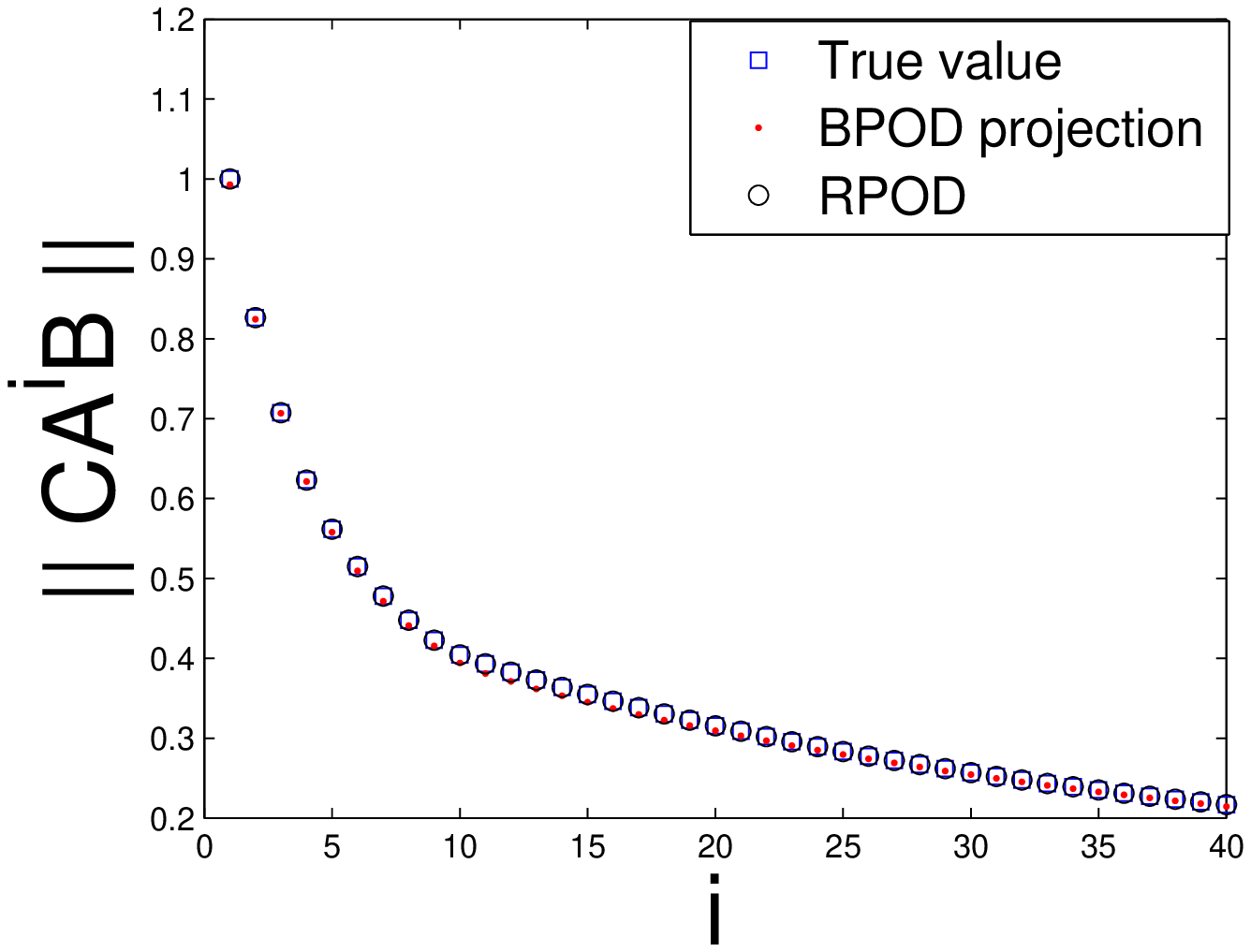}}
\subfigure[Comparison of  Output Relative Errors]{\includegraphics[width=1.67 in]{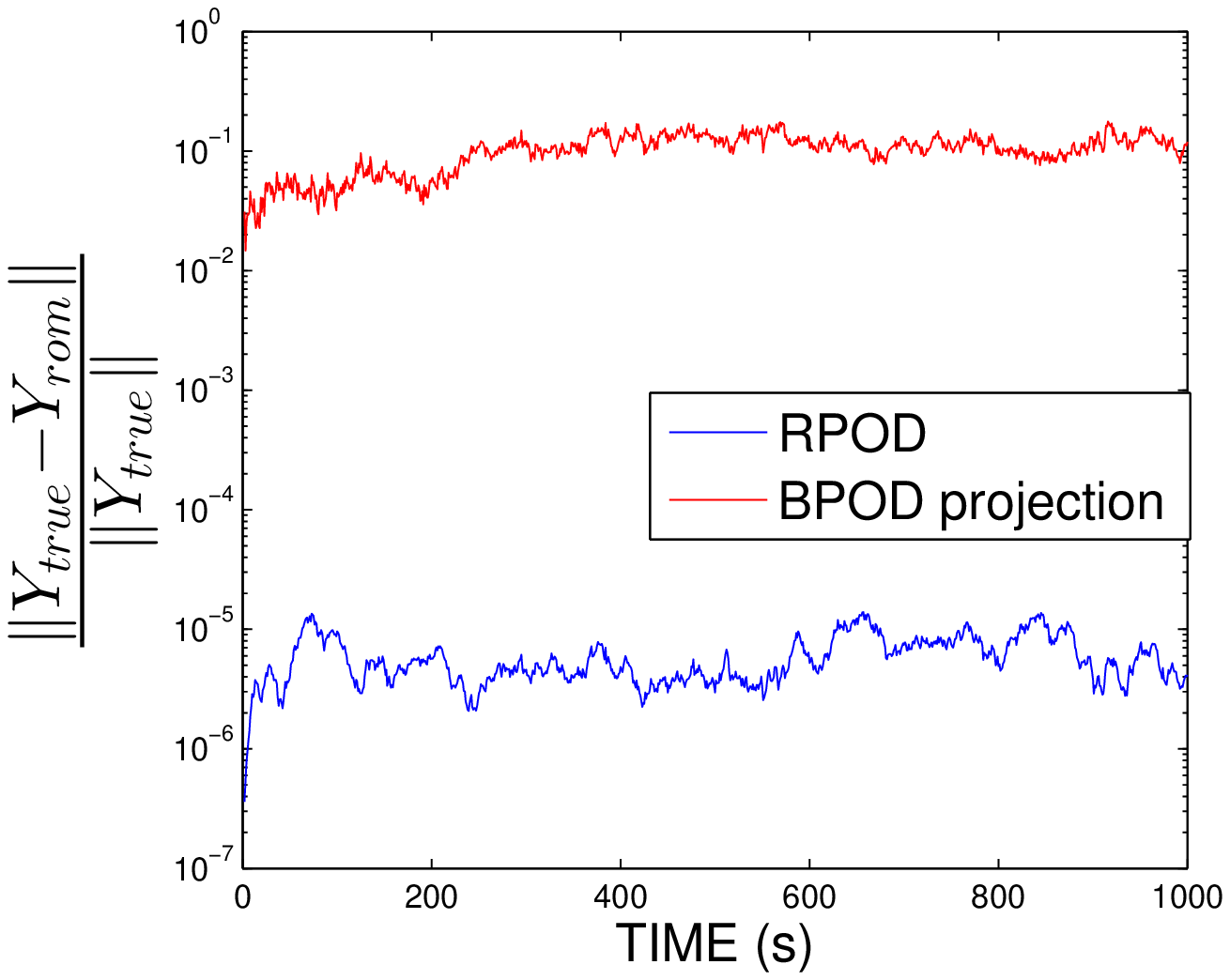}}
\caption{Atmospheric Dispersion Problem}
\label{atmo_impulse}
\end{figure}

In Fig. \ref{atmo_frequency}(a), we compare the frequency responses of  the ROM constructed using RPOD$^*$ and BPOD output projection with the full order system. We can see that  the frequency responses of the ROMs are almost the same as the frequency responses of the full order system. In Fig. \ref{atmo_frequency}(b), we show the errors between the frequency responses. 

\begin{figure}[tb]
\centering
\subfigure[Comparison of Frequency Responses]{
\includegraphics[width=1.67in]{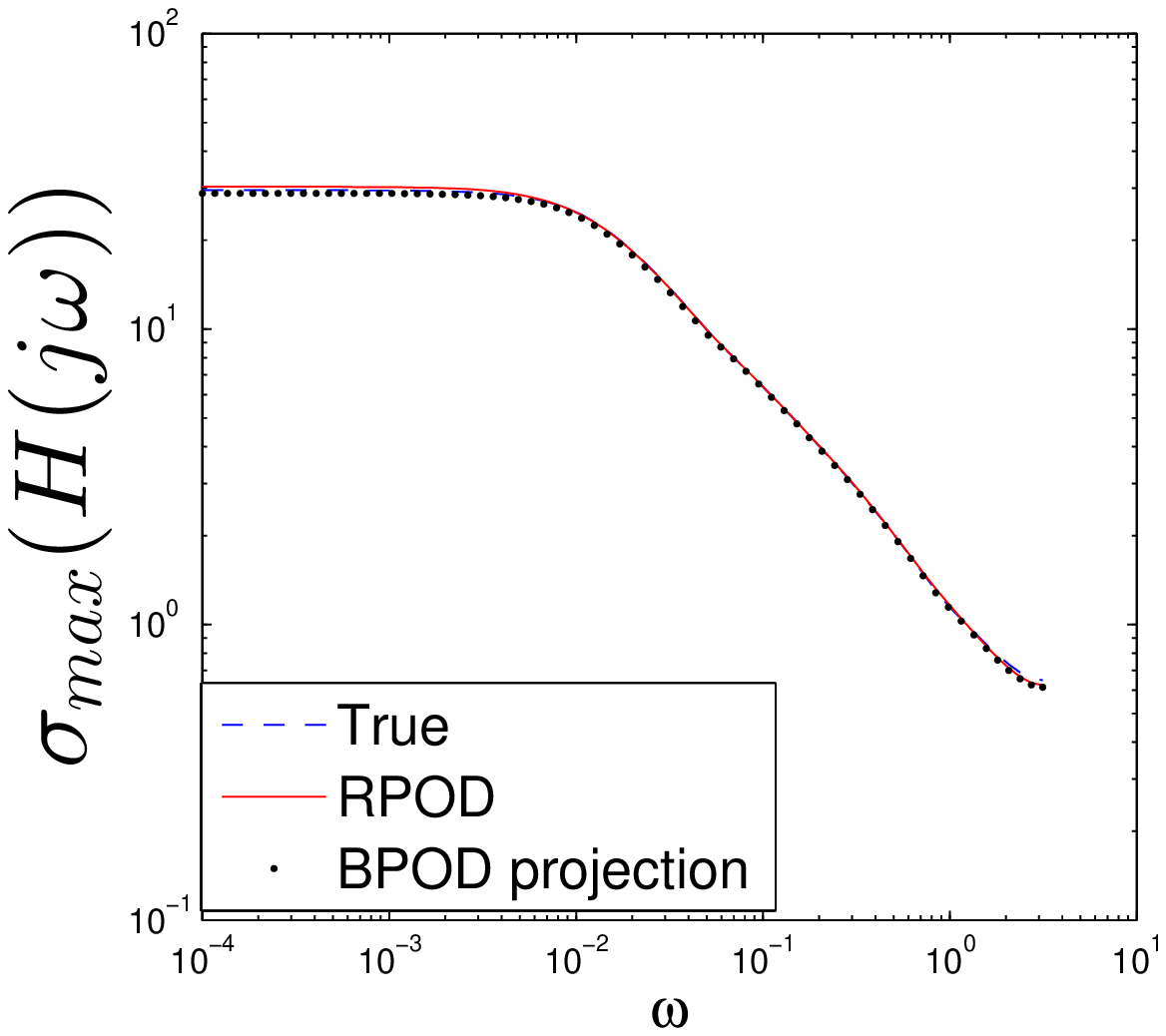}}
\subfigure[Comparison of Frequency Responses Errors]{\includegraphics[width=1.67 in]{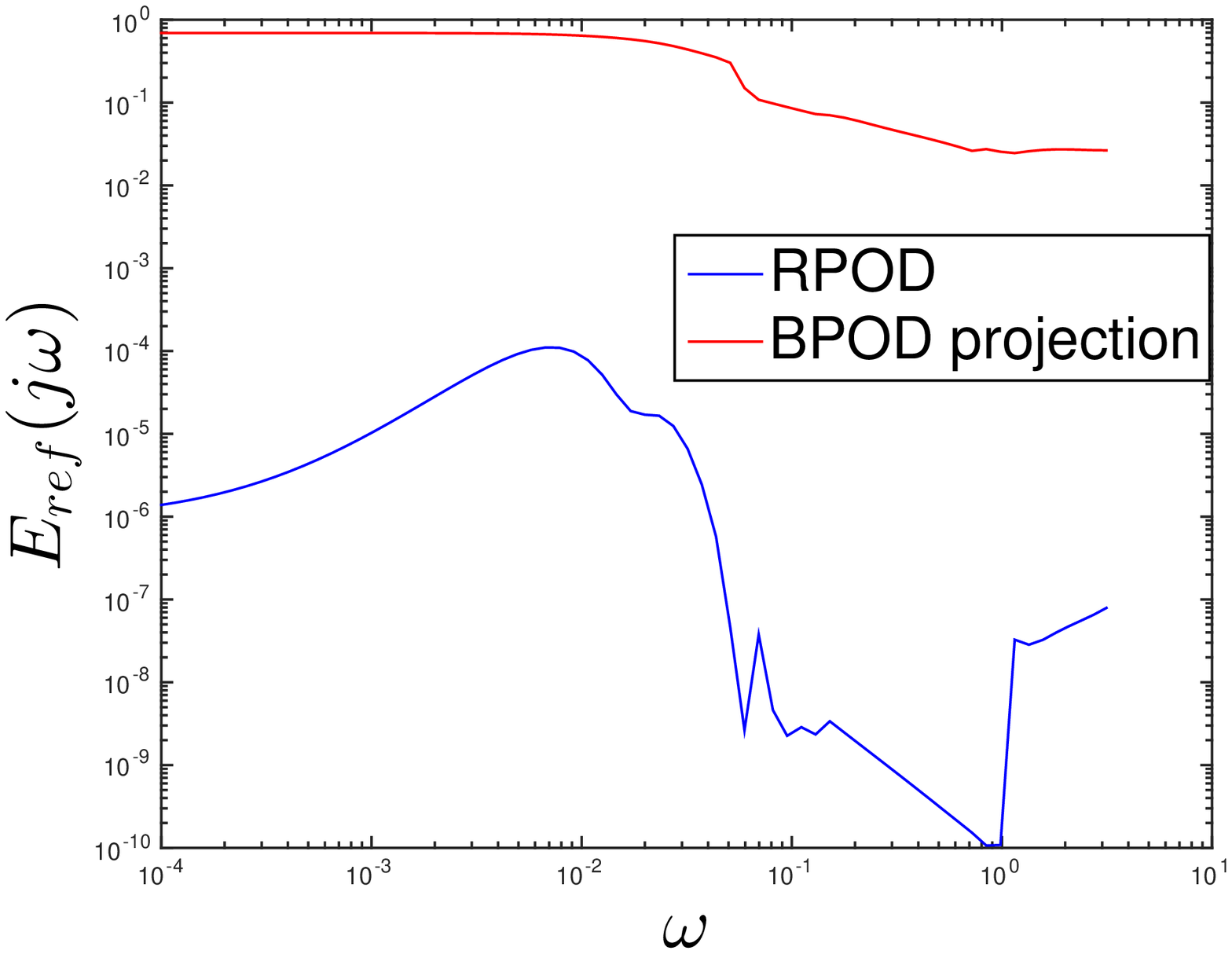}}
\caption{Atmospheric Dispersion Problem}
\label{atmo_frequency}
\end{figure}

The comparison of the computational time is shown in Section \ref{com_time}. It can be seen that for this example, the construction of the snapshot ensembles using RPOD$^*$ is faster than the  BPOD output projection, and the dominant computation cost is the construction of $Z'X$, where RPOD$^*$ algorithm is almost 16500 times faster than BPOD output projection.

From the examples above, we can see that for both examples showed in this paper, the RPOD$^*$ algorithm is much faster than BPOD/BPOD output projection algorithm, and is much more accurate than BPOD output projection algorithm. 

\subsection{Comparison of Computational Time} \label{com_time}
Comparison of computational time is shown in Table \ref{time} for two examples. All of the experiments reported in this paper were performed using  Matlab 2013b on a Dell OptiPlex 9020, Intel(R) Core (TM) i7-4770CPU, 3.40GHz, 4GB RAM machine.

\begin{table*}[bt]
\caption{Comparison of Computational Time}
\centering
 \begin{tabular}{c|c|c|c|c|}
\hline
\multicolumn{1}{c|}{} & \multicolumn{2}{|c|}{Heat} & \multicolumn{2}{|c|}{Atmospheric Dispersion}\\
\hline
& RPOD$^*$ & output projection & RPOD$^*$ & output projection\\
\hline
Generate $X$ & 0.0148s & 0.0518s & 55.59s & 30.461s\\
\hline
Generate $Z$ & 0.0340s & 0.0864s + 0.1582s(projection) &56.23s &421.99s + 9.287s(projection) \\
\hline
Construct $Z'X$ & 0.0292s & 0.0461s &0.321s & 5311s\\
\hline
Solve SVD & 0.1052s & 2.5550s &0.4859s &9.118s \\
\hline
Total time & 0.1832s & 2.8975s & 112.6269s & 5781.856s \\
\hline
     \end{tabular}
\label{time}
\end{table*}

\section{Conclusion}
In this paper, we have introduced a computationally optimal randomized POD procedure for the extraction of ROMs for large scale systems such as those governed by PDEs. The ROM is constructed by perturbing the primal and adjoint system with Gaussian white noise, where the computational cost to construct the snapshot ensembles is saved when compared to perturbing the primal and adjoint system with impulses in BPOD/BPOD output projection algorithm. Also, it leads to a much smaller SVD problem, and an orders of magnitude reduction in the computation required for constructing ROMs  over the BPOD/ BPOD output projection procedure.  The computational results show that  the accuracy of the RPOD$^*$ is much more accurate than the BPOD output projection algorithm.

\appendices

\section{Proof of Proposition \ref{P1}}\label{AP1}
\begin{proof}
The adjoint snapshot ensemble $Z_r$ can be written as:
\begin{eqnarray}\label{snap_p2}
Z_r = U_{co} \beta_{co} + U_{c \bar{o}} \delta \beta_{c \bar{o}} + U_{\bar{c} o} \beta_{\bar{c} o} + U_{\bar{c} \bar{o}} \delta \beta_{\bar{c} \bar{o}},
\end{eqnarray}
where $\delta \beta_{c \bar{o}}  = \epsilon \beta_{c \bar{o}}$, $\delta \beta_{\bar{c} \bar{o}} = \epsilon \beta_{\bar{c} \bar{o}}$, $\epsilon$ is defined in assumption A4, $\beta_{c \bar{o}}, \beta_{\bar{c} \bar{o}}$ are coefficient matrices. 
From (\ref{snap_p1}) and (\ref{snap_p2}), 
\begin{eqnarray}
Z_r'X_r = \beta_{co}' \alpha_{co} + \delta \beta_{c \bar{o}}' \alpha_{c \bar{o}} +  \beta_{\bar{c} o} ' \delta \alpha_{\bar{c}o}  + \delta \beta_{\bar{c} \bar{o}}'  \delta \alpha_{\bar{c} \bar{o}} \nonumber \\
=  \beta_{co}' \alpha_{co} + \epsilon \underbrace{(\beta_{c \bar{o}}' \alpha_{c \bar{o}} +\beta_{\bar{c} o} ' \alpha_{\bar{c}o})}_{E_1} + O(\epsilon^2), \nonumber \\
= \beta_{co}' \alpha_{co} + \epsilon E_1 + O(\epsilon^2).
\end{eqnarray}

And similarly,
\begin{eqnarray}
Z_r' A X_r = \beta_{co}' \Lambda_{co} \alpha_{co} + \epsilon \underbrace{( \beta_{c \bar{o}}' \Lambda_{c \bar{o}} \alpha_{c \bar{o}} + \beta_{\bar{c} o}' \Lambda_{\bar{c} o} \alpha_{\bar{c} o} )}_{E_2} + O(\epsilon^2) \nonumber 
\end{eqnarray}
\begin{eqnarray}
=  \beta_{co}' \Lambda_{co} \alpha_{co} + \epsilon  E_2 + O(\epsilon^2).
\end{eqnarray}

There are ``$l$" controllable and observable modes, then rank $(Z_r'X_r) \geq l$ due to the small perturbations. Denote
\begin{eqnarray}
\bar{H}_r = \beta_{co}' \alpha_{co} = \bar{L}_r \bar{\Sigma}_r \bar{R}_r' + \bar{L}_o \bar{\Sigma}_o \bar{R}_o', \nonumber\\
{H}_r = Z_r' X_r =  \beta_{co}' \alpha_{co} + \epsilon E_1 = L_r \Sigma_r R_r' +  {L}_o {\Sigma}_o {R}_o, 
\end{eqnarray}
where $\bar{H}_r , H_r \in \Re^{n \times m}$, and WLOG, $n \leq m$.  

Here, $\bar{H}_r$ is the ideal Hankel matrix constructed with the simplifying assumption (assumption A3 is satisfied), and it can be seen that the true Hankel matrix  $H_r$ (assumption A4 is satisfied) can be viewed as adding a small perturbation of $\bar{H}_r$, i.e., $H_r = \bar{H}_r + \epsilon E_1$.

$\bar{\Sigma}_r \in \Re^{l \times l}$ contains the $l$ non-zero singular values of $\bar{H}_r$ and $(\bar{L}_r, \bar{R}_r)$ are the corresponding left and right singular vectors. $\bar{\Sigma}_o \in \Re^{(n-l) \times (n-l)} = 0$ are the rest singular values, and $(\bar{L}_o, \bar{R}_o)$ are the corresponding left and right singular vectors. Similarly, $\Sigma_r \in \Re^{l \times l}$ contains the first $l$ non-zeros singular values of $H_r$, and $\Sigma_o \in \Re^{ (n - l) \times (n - l)}$ contains the rest singular values. The left and right singular vectors are partitioned in the same way.

1). From the perturbation theory \cite{perturbation1,perturbation2}, the  perturbed singular values and singular vectiors $(\Sigma_r, L_r, R_r)$ are related to the singular values and singular vectors $(\bar{\Sigma}_r, \bar{L}_r, \bar{R}_r)$ as:
\begin{eqnarray}
\Sigma_r = \bar{\Sigma}_r + \epsilon E_3 + O(\epsilon^2), \nonumber \\
{L}_r = \bar{L}_r + \Delta L_r,  R_r= \bar{R}_r+ \Delta R_r,
\end{eqnarray}
where $E_3, \Delta L_r, \Delta R_r$ are some matrices, and  $\|\Delta L_r \|,$ $\| \Delta R_r \| \propto O(\epsilon)$. The expression of $E_3$ is given by the follows \cite{perturbation1}.

For $\bar{\sigma}_i \in \bar{\Sigma}_r,$ (strictly positive singular values) with multiplicity $t$, $\bar{L}_t, \bar{R}_t$ are the corresponding left and right singular vectors, the perturbed singular values ${\sigma}_i \in {\Sigma}_r$, and 
\begin{eqnarray} \label{nonzero}
{\sigma}_i = \bar{\sigma}_i + \epsilon \underbrace{\frac{\lambda_i (\bar{R}_t' E_1' \bar{L}_t + \bar{L}_t' E_1 \bar{R}_t)}{2}}_{e_i} + O(\epsilon^2), i = 1, \cdots, t
\end{eqnarray}
where $\lambda_i(.)$ denotes the $i^{th}$ eigenvalue of $(.)$. Thus,
\begin{eqnarray}
E_3 = \begin{pmatrix} e_1  & &  \\  & \ddots & \\&  & e_l  \end{pmatrix},
\end{eqnarray}
where $e_1, \cdots, e_l$ are the coefficients computed from (\ref{nonzero}).
 
2). The ROM $\tilde{A} = S_r A T_r$ can be proved to be a perturbation of $A_b$ in (\ref{bpodor}).

The POD bases $T_r, S_r$ are:
\begin{eqnarray}
T_r = X_r R_r \Sigma_r^{-1/2}, S_r = \Sigma_r^{-1/2} L_r' Z_r'. 
\end{eqnarray}

Therefore,
\begin{eqnarray}
\tilde{A} = S_r A T_r = {\Sigma}_r^{-1/2} {L}_r' Z_r' A X_r {R}_r {\Sigma}_r^{-1/2}  \nonumber \\
= {\Sigma}_r^{-1/2} {L}_r' (\beta_{co}' \Lambda_{co} \alpha_{co} + \epsilon E_2)  {R}_r {\Sigma}_r^{-1/2}. 
\end{eqnarray}

From (\ref{nonzero}), it can be proved that: 
\begin{eqnarray}
 \Sigma_r^{-1/2}  = \bar{\Sigma}_r^{-1/2}  + \epsilon C_r, 
\end{eqnarray}
where $C_r $ is a diagonal coefficient matrix. Therefore, 
\begin{eqnarray}
{\Sigma}_r^{-1/2} {L}_r' = (\bar{\Sigma}_r^{-1/2} + \epsilon C_r) (\bar{L}_r' + \Delta L_r') = \bar{\Sigma}_r^{-1/2} \bar{L}_r' + \Delta_1,
\end{eqnarray}
where $\Delta_1$ is some matrix, and  $\| \Delta_1 \|_2 = k_1 \epsilon$, $k_1$ is a constant.
Similarly,
\begin{eqnarray}
{R}_r {\Sigma}_r^{-1/2} = \bar{R}_r \bar{\Sigma}_r^{-1/2} + \Delta_2,
\end{eqnarray}
where $\Delta_2$ is some matrix, and $\| \Delta_2 \|_2 = k_2 \epsilon$, $k_2$ is a constant. Thus,
\begin{eqnarray}
\tilde{A} = \bar{\Sigma}_r^{-1/2} \bar{L}_r' \beta_{co}' \Lambda_{co} \alpha_{co} \bar{R}_r \bar{\Sigma}_r^{-1/2}  + \Delta_3 + O(\epsilon^2),
\end{eqnarray}
where $\Delta_3$ is some matrix, and $\| \Delta_3 \|_2 = k_3 \epsilon$, $k_3$ is a constant. If we let
\begin{eqnarray}
\bar{A} = \underbrace{\bar{\Sigma}_r^{-1/2} \bar{L}_r' \beta_{co}' }_{\bar{P}_r} \Lambda_{co} \underbrace{\alpha_{co} \bar{R}_r \bar{\Sigma}_r^{-1/2}}_{\bar{P}_r^{-1}},
\end{eqnarray}
then 
\begin{eqnarray}\label{eigper}
\tilde{A} = \bar{A} + \Delta_3 + O(\epsilon^2) = P_r  \Lambda_r  P_r^{-1}, 
\end{eqnarray}
Following the same proof in Section \ref{heuristic} ( (\ref{bpodor})-(\ref{bpodeig})), it can be proved that $\Lambda_{co}$ are the eigenvalues of $\bar{A}$, and $\bar{P}_r$ are the corresponding eigenvectors.  

3). From perturbation theory \cite{eigp}, since
\begin{eqnarray}\label{eigper}
\tilde{A} = \bar{A} + \Delta_3 + O(\epsilon^2) = P_r  \Lambda_r  P_r^{-1}, 
\end{eqnarray}
it can be proved that $\| P_r - \bar{P}_r \| \leq \| \Delta_3 \| = k_3 \epsilon, \| \Lambda_r - \Lambda_{co} \| \leq \| \Delta_3 \| = k_3 \epsilon$.   Thus,
\begin{eqnarray}\label{bases_pert}
\Psi_{r} = T_r P_r = X_r \bar{R} _r \bar{\Sigma}_r^{-1/2} \bar{P}_r  + \Delta_4, \nonumber \\
\Phi_{r} = P_r^{-1} S_r =  \bar{P}_r^{-1} \bar{\Sigma}_r^{-1/2} \bar{L}_r' Z_r'  + \Delta_5,
\end{eqnarray}
where $\Delta_4, \Delta_5$ are some matrices and $\| \Delta_4 \|, \| \Delta_5 \| \propto O(\epsilon)$. Substitute (\ref{bases_pert}) into the ROM Markov parameters, and collect the small perturbation terms, we have:
\begin{eqnarray}
C_r A_r^i B_r = C \Psi_r {\Lambda_{r}^i} \Phi_{r} B 
= C V_{co} \Lambda_{co}^i U_{co}' B' + \Delta,
\end{eqnarray}
where $\Delta$ is some matrix, and $\| \Delta \|  \propto O(\epsilon )$. 

\section{Proof of Corollary \ref{cr2}}\label{AP2}
For $\bar{\sigma}_i \in \bar{\Sigma}_o$ (zero singular values) \cite{perturbation1} with multiplicity $n-l$, the corresponding left and right singular vectors are $\bar{L}_o, \bar{R}_o$. The perturbed singular values ${\sigma}_i \in {\Sigma}_o$ and
\begin{eqnarray}\label{zero}
{\sigma}_i = \epsilon \sqrt{\lambda_i (\bar{R}_o' E_1' \bar{L}_o \bar{L}_o' E_1 \bar{R}_o)}, i = 1, \cdots n - l.
\end{eqnarray}

From (\ref{nonzero}) and (\ref{zero}), we can see that:
\begin{eqnarray}
\sigma_l = \bar{\sigma}_l + e_l \epsilon + O(\epsilon^2),  
\sigma_{l+1} = e_{l+1} \epsilon, 
\end{eqnarray}
Hence,we have:
\begin{eqnarray}
 \sigma_{l+1} \propto O(\epsilon),
\end{eqnarray}
and
\begin{eqnarray}\label{rme}
\| C_r A_r^i B_r - C A^i B \| \propto O(\epsilon) \propto O(\sigma_{l+1}). 
\end{eqnarray}
\end{proof}
\bibliographystyle{IEEEtran}
\bibliography{IPOD_refs}

\begin{thebibliography}{10}
\providecommand{\url}[1]{#1}
\csname url@samestyle\endcsname
\providecommand{\newblock}{\relax}
\providecommand{\bibinfo}[2]{#2}
\providecommand{\BIBentrySTDinterwordspacing}{\spaceskip=0pt\relax}
\providecommand{\BIBentryALTinterwordstretchfactor}{4}
\providecommand{\BIBentryALTinterwordspacing}{\spaceskip=\fontdimen2\font plus
\BIBentryALTinterwordstretchfactor\fontdimen3\font minus
  \fontdimen4\font\relax}
\providecommand{\BIBforeignlanguage}[2]{{%
\expandafter\ifx\csname l@#1\endcsname\relax
\typeout{** WARNING: IEEEtran.bst: No hyphenation pattern has been}%
\typeout{** loaded for the language `#1'. Using the pattern for}%
\typeout{** the default language instead.}%
\else
\language=\csname l@#1\endcsname
\fi
#2}}
\providecommand{\BIBdecl}{\relax}
\BIBdecl

\bibitem{stockie}
J.~M. Stockie, ``The mathematics of atmospheric dispersion modeling,''
  \emph{SIAM Review}, vol. 53, No.2, pp. 349--372, 2011.

\bibitem{willcox}
K.~Willcox and J.~Peraire, ``Balanced model reduction via the proper orthogonal
  decomposition,'' \emph{AIAA Journal}, vol.~40, pp. 2323--2330, 2002.

\bibitem{rowley1}
C.~W. Rowley, ``Model reduction for fluids using balanced proper orthogonal
  decomposition,'' \emph{International Journal of Bifurcation and Chaos},
  vol.~15, pp. 997--1013, 2005.

\bibitem{moore}
B.~C. Moore, ``Principal component analysis in linear systems: Controllability,
  observability and model reduction,'' \emph{IEEE Transactions on Automatic
  Control}, vol. 26, No.1, pp. 17--32, 1981.

\bibitem{pod2}
G.~Berkooz, P.~Holmes, and J.~L. Lumley, ``The proper orthogonal decomposition
  in the analysis of turbulent flows,'' \emph{Annual review of fluid
  mechanics}, vol.~25, pp. 539--575, 1993.

\bibitem{kung}
S.~Kung, ``A new identification method and model reduction algorithm via
  singular value decomposition,'' \emph{12th Asilomar Conference on Circuits,
  Systems and Computers}, pp. 705--714, Nov. 1978.

\bibitem{juang1985}
J.-N. Juang and R.~S. Pappa, ``An eigensystem realization algorithm for model
  parameter identification and model reduction,'' \emph{Journal of Guidance,
  Control, and Dynamics}, vol. 8, No.5, pp. 620--627, 1985.

\bibitem{rowley4}
Z.~Ma, S.~Ahuja, and C.~W. Rowley, ``Reduced order models for control of fluids
  using the eigensystem realization algorithm,'' \emph{Theoret. Comput. Fluid
  Dyn.}, vol.~25, pp. 233--247, 2011.

\bibitem{schmid}
P.~J. Schmid, ``Dynamic mode decomposition of numerical and experimental
  data,'' \emph{Journal of Fluid Mechanics}, vol. 656, pp. 5--28, 2010.

\bibitem{rowleynew}
J.~H. Tu, C.~W. Rowley, D.~M. Luchtenburg, S.~L. Brunton, and J.~N. Kutz, ``On
  dynamic mode decomposition: Theory and applications,'' \emph{Journal of
  Computational Dynamics}, vol. 1, Issue 2, pp. 391--421, 2014.

\bibitem{apod1}
D.~B. Pourkargar and A.~Armaou, ``Geometric output tracking of nonlinear
  distributed parameter systems via adaptive model reduction,'' \emph{Chemical
  Engineering Science}, vol. 116, pp. 418--427, 2014.

\bibitem{apod2}
------, ``Design of apod-based switching dynamic observers and output feedback
  control for a class of nonlinear distributed parameter systems,''
  \emph{Chemical Engineering Science}, vol. 136, pp. 62--75, 2015.

\bibitem{rowleytail}
J.~H. Tu and C.~W. Rowley, ``An improved algorithms for balanced pod through an
  analytic treatment of impulse response tails,'' \emph{Journal of
  Computational Physics}, vol. 231, pp. 5317--5333, June, 2012.

\bibitem{MC_SVD}
P.~Drineas \emph{et~al.}, ``Fast monte carlo algorithms for matrices ii:
  Computing a low rank approximation to a matrix,'' \emph{SIAM J. Computing},
  vol.~36, pp. 158--183, 2006.

\bibitem{mohoney2}
M.~W. Mahnoey, ``Randomized algorithms for matrices and data,'' arXiv:
  1104.5557, Tech. Rep., 2011.

\bibitem{randproj1}
N.~Halko, P.~Martinsson, and J.~Tropp, ``Finding structure with randomness:
  probabilistic algorithms for constructing approximate matrix
  decompositions,'' \emph{SIAM review}, vol. 52(2), pp. 217--288, 2011.

\bibitem{acc2015}
D.~Yu and S.~Chakravorty, ``A randomized proper orthogonal decomposition
  technique,'' in \emph{Proceedings of American Control Conference}, 2015, p.
  in press.

\bibitem{perturbation1}
T.~Soderstrom, ``Perturbation results for singular values,'' Department of
  Information Technology at Uppsala University, Tech. Rep., 1999.

\bibitem{perturbation2}
B.~Friedlander and B.~Porat, ``First-order perturbation analysis of singular
  vectors in singular value decomposition,'' \emph{IEEE Transactions on Signal
  Processing}, vol. 56, Issue 7, pp. 3044--3049, 2008.

\bibitem{RPODstar}
D.~Yu and S.~Chakravorty, ``A computationally optimal randomized proper
  orthogonal decomposition technique,'' arXiv: 1509.05705, Tech. Rep., 2015.

\bibitem{BPODprojection}
M.~Ilak and C.~W. Rowley, ``Modeling of transitional channel flow using
  balanced proper orthogonal decomposition,'' \emph{Physics of Fluids},
  vol.~20, 2008.

\bibitem{eigp}
T.~Kato, \emph{Perturbation Theory for Linear Operators}.\hskip 1em plus 0.5em
  minus 0.4em\relax New York: Springer-Verlag, 1995.

\end{thebibliography}

\end{document}